\theoremstyle{definition}
\newtheorem{theorem}{Theorem}[section]
\newtheorem{definition}[theorem]{Definition}
\newtheorem{proposition}[theorem]{Proposition}
\newtheorem{lemma}[theorem]{Lemma}
\newtheorem{remark}[theorem]{Remark}
\newtheorem{corollary}[theorem]{Corollary}
\newtheorem{question}[theorem]{Question}
\newtheorem*{remark*}{Remark}
\numberwithin{equation}{section}
\newcommand{\cN}{\mathcal{N}}
\newcommand{\cH}{\mathcal{H}}
\newcommand{\cB}{\mathcal{B}}
\newcommand{\cS}{\mathcal{S}}
\newcommand{\bH}{\mathbf{H}}
\newcommand{\R}{\mathbb{R}}
\DeclareMathOperator{\vol}{Vol}
\DeclareMathOperator{\area}{Area}
\DeclareMathOperator{\spt}{spt}
\DeclareMathOperator{\Graph}{Graph}
\DeclareMathOperator{\diam}{diam}
\DeclareMathOperator{\loc}{loc}
\title[Entropy and Partial Regularity]{Entropy in A Closed Manifold and Partial Regularity of Mean Curvature Flow Limit of Surfaces}
\date{\today}
\author{Ao Sun}
\address{Department of Mathematics, Massachusetts Institute of Technology, Cambridge, MA 02139, USA}
\email{aosun@mit.edu}
\begin{document}

\begin{abstract}
Inspired by the idea of Colding-Minicozzi in \cite{CM1}, we define (mean curvature flow) entropy for submanifolds in a general ambient Riemannian manifold. In particular, this entropy is equivalent to area growth of a closed submanifold in a closed ambient manifold with non-negative Ricci curvature. Moreover, this entropy is monotone along the mean curvature flow in a closed Riemannian manifold with non-negative sectional curvatures and parallel Ricci curvature. As an application, we show the partial regularity of the limit of mean curvature flow of surfaces in a three dimensional Riemannian manifold with non-negative sectional curvatures and parallel Ricci curvature.
\end{abstract}

\maketitle

\section{Introduction}
The mean curvature flow entropy, firstly introduced by Colding-Minicozzi \cite{CM1} to study mean curvature flow singularities in $\R^n$, is a very important quantity characterizing all the scales of a submanifold in $\R^n$. In this paper, we follow the idea of Colding-Minicozzi to define the entropy of submanifolds in a general ambient Riemannian manifold. 

Let us first recall the entropy defined by Colding-Minicozzi. For a $m$-dimensional submanifold $M\subset\R^n$, the entropy $\lambda$ of $M$ is defined by
\begin{equation}
\lambda(M)=\sup_{x_0\in\R^n,t_0\in\R^+}\frac{1}{(4\pi t_0)^{m/2}}\int_{M}e^{-\frac{|x-x_0|^2}{4t_0}}d\mu_{M}.
\end{equation}
The entropy is monotone along the mean curvature flow of submanifolds by Huisken's monotonicity formula, see \cite{Hu} and \cite{CM1}. Therefore the entropy is an important quantity in the study of mean curvature flow. It is known that 
\[\rho_{x_0,t_0}(x)=\frac{1}{(4\pi t_0)^{n/2}}e^{-\frac{|x-x_0|^2}{4t_0}}\]
is just the heat kernel at a fixed time $t_0$ and a fixed point $x_0$ in the Euclidean space $\R^n$. The entropy is the supremum among all the possible integrals of the heat kernel, with a dimensional correction constant.

Inspired by Colding-Minicozzi's definition, in a general $n$-dimensional ambient Riemannian manifold $\cN$, we define the entropy of a $m$-dimensional submanifold $M$. Throughout this paper, a closed Riemannian manifold is a compact Riemannian manifold without boundary.
\begin{definition}
Let $\cN$ be a $n$-dimensional closed or complete Riemannian manifold. Let $M$ be a $m$-dimensional submanifold of $\cN$. Then we define the {\bf entropy} $\lambda$ of $M$ to be
\begin{equation}
\lambda(M)=\sup_{x\in \cN,t\in\R^+}t^{(n-m)/2}\int_M \cH(x,y,t)dy.
\end{equation}
Here $\cH$ is the heat kernel on the manifold $\cN$.
\end{definition}

To the author's knowledge, this is first intrinsic generalization of Colding-Minicozzi's entropy to a curved ambient space. There are some extrinsic generalizations of Colding-Minicozzi's entropy have been studied before, see \cite{M} and \cite{Z}. These definitions are extrinsic because they are defined by isometrically embedding the ambient manifold into $\R^N$, and then using Colding-Minicozzi's theory in $\R^N$. These quantities are not monotone decreasing along a mean curvature flow. We also remark that very recently Bernstein \cite{B} defined a version of Colding-Minicozzi's entropy in a hyperbolic space.

\bigskip
The entropy will be used to study mean curvature flow in a closed Riemannian manifold. In particular, we highlight the following theorem of the partial regularity of the limit of long-time mean curvature flow. Recall that a mean curvature flow $M_t$ is called a {\bf long-time} mean curvature flow if it is defined for $t\in[0,\infty)$.

\begin{theorem}\label{thm:Main theorem 2}
Let $\cN$ be a closed $3$-dimensional Riemannian manifold with non-negative sectional curvatures and parallel Ricci curvature. Suppose $M_t$ is a long-time mean curvature flow of closed embedded surfaces in $\cN$, then there exists sequence $t_i\to\infty$ such that $M_{t_i}$ converges to a varifold $V$ in the sense of varifolds, and $\spt V$ is a smoothly embedded minimal surface.
\end{theorem}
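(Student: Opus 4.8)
The plan is to combine area (and entropy) monotonicity with varifold compactness to produce a stationary limit, and then to use the uniform entropy bound together with regularity theory to upgrade that limit to a smooth minimal surface. First I would observe that in any ambient Riemannian manifold the first variation of area along the flow gives $\frac{d}{dt}\area(M_t)=-\int_{M_t}|\bH|^2\,d\mu$, so $\area(M_t)$ is non-increasing and converges to some $A_\infty\ge 0$ as $t\to\infty$. In particular $\area(M_t)\le\area(M_0)$ for every $t$, and integrating the identity above yields $\int_0^\infty\int_{M_t}|\bH|^2\,d\mu\,dt=\area(M_0)-A_\infty<\infty$. Hence there is a sequence $t_i\to\infty$ along which $\int_{M_{t_i}}|\bH|^2\,d\mu\to 0$.

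Using the uniform area bound, the compactness theorem for integral varifolds (Allard) lets me pass to a subsequence, not relabeled, so that $M_{t_i}\to V$ in the sense of varifolds, with $V$ an integral $2$-varifold; here I use that $\|\delta M_{t_i}\|=\int_{M_{t_i}}|\bH|\,d\mu\le (\area M_{t_i})^{1/2}(\int_{M_{t_i}}|\bH|^2)^{1/2}$ is uniformly bounded. To see that $V$ is stationary, recall that the first variation of each $M_{t_i}$ is $\delta M_{t_i}(X)=-\int_{M_{t_i}}\langle \bH,X\rangle\,d\mu$; by Cauchy--Schwarz and the area bound, $|\delta M_{t_i}(X)|\le \sup|X|\,(\area M_{t_i})^{1/2}(\int_{M_{t_i}}|\bH|^2)^{1/2}\to 0$ for every smooth vector field $X$ on $\cN$. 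Since first variation is weakly continuous under varifold convergence, $\delta V\equiv 0$, so $V$ is a stationary integral varifold and $\spt V$ is a (possibly singular) minimal surface.

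The hard part will be the regularity, that is, showing $\spt V$ has no singular points. Here the curvature hypotheses enter through the entropy: since $\lambda$ is monotone non-increasing along the flow (by the earlier theorem, using non-negative sectional curvature and parallel Ricci curvature), one has $\lambda(M_t)\le\lambda(M_0)$ for all $t$, which gives uniform bounds on the Gaussian density ratios of the $M_t$ and hence on the density of $V$. At every point where the density of $V$ is close to $1$, Allard's regularity theorem shows that $\spt V$ is a smooth embedded minimal surface. To control the remaining points I would invoke a local regularity theorem for mean curvature flow in the style of White: the uniform entropy bound supplies the required Gaussian density estimates, so at any point whose density ratio lies below the universal threshold the flows $M_t$ have uniformly bounded second fundamental form for large $t$, and $V$ is smooth there.

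The principal obstacle is to rule out the residual set of points of density at least $1+\epsilon$, which is what separates partial regularity from full regularity of the support. In the present setting of surfaces in a $3$-manifold I expect this to follow from a dimension-reduction and stratification argument combined with the embeddedness of each $M_{t_i}$: tangent cones to $V$ are stationary integral $2$-cones in $\R^3$, and the embeddedness of the approximating flow, which persists in the varifold limit as the absence of transverse self-intersections, should force these cones to be multiplicity planes, after which Allard's theorem again yields smoothness. Making this final step rigorous — in particular controlling the multiplicity and excluding genuine singular cones using only the embeddedness and the entropy bound — is the crux of the argument.
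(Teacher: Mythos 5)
Your first two paragraphs are correct and essentially match the paper's setup: area monotonicity produces $t_i\to\infty$ with $\int_{M_{t_i}}|\bH|^2\,d\mu\to 0$, and varifold compactness plus weak continuity of the first variation gives a stationary integral varifold limit $V$. The genuine gap is in the regularity step, and it is not merely a detail you left to fill in: the route you sketch cannot be completed. Your plan is to control Gaussian densities via the entropy bound, apply Allard's theorem (or White's local regularity) where the density is near $1$, and then eliminate the remaining high-density points by dimension reduction, arguing that embeddedness of the $M_{t_i}$ "should force tangent cones to be multiplicity-one planes." That claim fails: a sequence of embedded surfaces with $\int|\bH|^2\to 0$ and bounded area can converge with multiplicity $\geq 2$ (two nearly parallel sheets collapsing onto one another), in which case the tangent cones of $V$ at such points are planes of multiplicity $\geq 2$, and neither Allard's theorem nor any $\epsilon$-regularity theorem applies there. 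Ruling this out is precisely Ilmanen's multiplicity-one problem, which is open; indeed the paper itself poses the multiplicity of the convergence $M_{t_i}\to V$ as an open question and only proves smoothness of $\spt V$, not multiplicity-one regularity of $V$ as a varifold.

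The paper sidesteps the multiplicity issue with two ingredients absent from your proposal. First, since the long-time flow is smooth, all $M_t$ are diffeomorphic, so the genus is uniformly bounded; Gauss--Bonnet then gives a uniform bound on $\int_{M_{t_i}}|A|^2$, so the curvature concentration measures subconverge and the concentration set $\cS$ is \emph{finite}. Second, away from $\cS$, Simon's approximate graphical decomposition (Theorem \ref{Thm:Simon Lemma}) is applied to each $M_{t_i}$ \emph{before} passing to the limit: in a small ball each $M_{t_i}$ splits into finitely many embedded disks, and each disk, by the entropy-induced area growth bound (Theorem \ref{Thm:entropy bound area growth} together with Corollary \ref{Cor:Area bound of disks}), has area ratios at most $1+C\epsilon^{\gamma}$. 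Each layer therefore converges to a varifold of density close to $1$, to which Allard's theorem (Theorem \ref{Thm:Allard regularity}) and Schauder estimates apply; the support of $V$ is then a union of smooth minimal graphs, hence by the strong maximum principle a smooth embedded minimal surface away from $\cS$, and the finitely many points of $\cS$ are removed by the Gulliver/Choi--Schoen removable singularity theorem. In short, the missing ideas are the genus/total-curvature bound and the layer-by-layer decomposition of Simon--Ilmanen (Theorem \ref{Thm:Partial regularity with H and area growth bound}), which yield smoothness of the \emph{support} while avoiding exactly the multiplicity obstruction on which your argument founders.
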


For the definitions of varifolds and the support of a varifold ($\spt V$), we refer the readers to the standard context in geometric measure theory, for example \cite{Si}.

\begin{remark}
There are many closed manifolds $\cN$ which satisfy the curvature condition in the statement of Theorem \ref{thm:Main theorem 2}, for example the sphere $S^3$ and the torus $T^3$ with standard metrics. Therefore suppose $M_t$ is a long-time mean curvature flow of closed embedded surfaces in $S^3$ or $T^3$ with standard metrics, then there exists a sequence $t_i\to\infty$ such that $M_{t_i}$ converges to a varifold $V$ in the sense of varifolds, and $\spt V$ is a smoothly embedded minimal surface.
\end{remark}

Recall that a mean curvature flow $M_t$ is {\bf ancient} if it is defined for $t\in(-\infty,0)$. If we assume the entropy bound of an ancient mean curvature flow is finite, we obtain the following partial regularity of the backward limit of the flow.

\begin{theorem}\label{thm:Main theorem 3}
Let $\cN$ be a closed $3$-dimensional Riemannian manifold with non-negative sectional curvatures and parallel Ricci curvature. Suppose $M_t$ is an ancient mean curvature flow of closed embedded surfaces in $\cN$, and $\lambda(M_t)<\infty$. Then there exists sequence $t_i\to-\infty$ such that $M_{t_i}$ converges to a varifold $V$ in the sense of varifolds, and $\spt V$ is a smoothly embedded minimal surface.
\end{theorem}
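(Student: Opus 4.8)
The plan is to run the same argument as for Theorem~\ref{thm:Main theorem 2}, now extracting the limit backward in time and using the finite-entropy hypothesis to supply the area bounds that were automatic in the long-time case. Throughout I read the hypothesis $\lambda(M_t)<\infty$ as a uniform bound $\Lambda:=\sup_{t<0}\lambda(M_t)<\infty$; by the monotonicity of the entropy along the flow (valid under the assumed non-negative sectional and parallel Ricci conditions) this supremum is attained in the limit, $\Lambda=\lim_{t\to-\infty}\lambda(M_t)$. The first step is to convert this into geometric bounds: since non-negative sectional curvature implies non-negative Ricci curvature, the equivalence between entropy and area growth applies, and the bound $\Lambda<\infty$ yields uniform control on the area of $M_t$ and, more usefully, on the local area ratios $r^{-2}\area(M_t\cap B_r(x))$, uniformly over $x\in\cN$, $r>0$ and $t<0$.

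Next I would select a favourable sequence of times. Along any mean curvature flow one has $\frac{d}{dt}\area(M_t)=-\int_{M_t}|\bH|^2\,d\mu$, so the area is non-increasing in $t$; together with the uniform upper bound from the first step, $\area(M_t)$ converges as $t\to-\infty$. Integrating the identity gives
\begin{equation}
\int_{-\infty}^{0}\!\int_{M_t}|\bH|^2\,d\mu\,dt=\lim_{t\to-\infty}\area(M_t)-\area(M_0)<\infty,
\end{equation}
and hence there is a sequence $t_i\to-\infty$ with $\int_{M_{t_i}}|\bH|^2\,d\mu\to 0$.

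I would then pass to the varifold limit. The uniform local area ratio bounds let me regard the $M_{t_i}$ as integral $2$-varifolds of uniformly bounded mass, and the estimate $\|\delta M_{t_i}\|\le(\int_{M_{t_i}}|\bH|^2\,d\mu)^{1/2}\,\area(M_{t_i})^{1/2}$ shows their first variations are uniformly bounded, so Allard's compactness theorem produces a subsequence converging to an integral varifold $V$. For a fixed smooth vector field $X$, varifold convergence gives $\delta M_{t_i}(X)\to\delta V(X)$, while the first-variation identity $\delta M_{t_i}(X)=-\int_{M_{t_i}}\langle \bH,X\rangle\,d\mu$ together with Cauchy--Schwarz and the mass bound forces the right-hand side to $0$; hence $\delta V(X)=0$ and $V$ is a stationary integral varifold in $\cN$.

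The remaining, and hardest, step is to show that $\spt V$ is a smoothly embedded minimal surface, which I expect to be the main obstacle. At points of $V$ of density one this is immediate from Allard's regularity theorem; the difficulty is entirely at the higher-multiplicity set, where several sheets of the embedded surfaces $M_{t_i}$ collapse onto one another in the limit. To handle this I would use that $V$ is the varifold limit of \emph{embedded} surfaces with asymptotically vanishing mean curvature in a \emph{three}-dimensional ambient space, and appeal to the codimension-one sheeting and regularity theory for such limits, in which the curvature conditions on $\cN$ enter through the relevant curvature estimates and maximum principle; these rule out genuine interior singularities and show that the support is a smooth embedded minimal surface carrying an integer multiplicity. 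Making this last step rigorous---in particular certifying that no singular points survive in the limit---is where the real work lies.
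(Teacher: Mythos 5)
Your first three steps (reading the hypothesis as a uniform entropy bound via monotonicity, converting it to uniform area-ratio bounds through the entropy/area-growth equivalence, integrating $\partial_t\area(M_t)=-\int_{M_t}|\bH|^2\,d\mu$ backward in time to extract $t_i\to-\infty$ with $\int_{M_{t_i}}|\bH|^2\,d\mu\to 0$, and passing to a stationary integral varifold limit) coincide with the paper's argument, which simply reruns the proof of Theorem~\ref{thm:Main theorem 2} with the assumed finiteness of $\lambda$ replacing Proposition~\ref{Prop:entropy is finite}. But the final step --- smoothness of $\spt V$ --- is a genuine gap in your proposal, and you acknowledge as much. The difficulty is not one you can defer to ``codimension-one sheeting and regularity theory'': sheeting theorems of Schoen--Simon or Wickramasekera type require the approximating surfaces to be minimal or at least stable, whereas your $M_{t_i}$ are neither; they only have small $L^2$ mean curvature. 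With no structural hypothesis beyond that, a sequence of embedded surfaces with $\int|\bH|^2\to 0$ and bounded area ratios can degenerate badly, so the step as you describe it would not go through.

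What closes the gap --- and what the paper uses --- is Theorem~\ref{Thm:Partial regularity with H and area growth bound} (Ilmanen's argument), and its crucial extra hypothesis is a \emph{uniform genus bound}, which you never mention or verify. That bound is what makes everything work: Gauss--Bonnet converts it into a uniform bound on $\int_{M_{t_i}}|A|^2\,d\mu$, so the measures $|A^N|^2\,d\mu$ concentrate on at most finitely many points; away from that finite set, Simon's graphical decomposition lemma (Theorem~\ref{Thm:Simon Lemma}) decomposes $M_{t_i}$ into embedded disks with near-planar area ratios (Corollary~\ref{Cor:Area bound of disks}), Allard's theorem and Schauder estimates make the limit sheets smooth minimal graphs, the maximum principle handles touching sheets, and Gulliver's removable singularity theorem removes the finite concentration set. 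In your setting the genus hypothesis is in fact free --- the flow is smooth on $(-\infty,0)$, so all the $M_t$ are isotopic and the genus is constant --- so the correct completion of your proof is simply to record this and invoke Theorem~\ref{Thm:Partial regularity with H and area growth bound}, rather than to attempt a regularity theory that does not apply to non-minimal surfaces.
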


Since mean curvature flow is the gradient flow of area functional, it is believed that there would be a Morse theory based on the area functional that can be studied by mean curvature flow. Nevertheless, the singular behaviour of mean curvature flow is very complicated, hence this approach is far from well understood. Our main theorems show that if there is no singularity appearing during the mean curvature flow surfaces in a closed Riemannian manifold with certain curvature assumptions, and then such a flow really connects two critical points (i.e. minimal surfaces). This might serve as a step towards the understanding of mean curvature flow in a closed Riemannian manifold.

The partial regularity follows Ilmanen's idea in \cite{I}. \cite{I}, Ilmanen proved the partial regularity of the tangent flow of mean curvature flow of closed embedded surfaces in $\R^3$. For the sake of completeness, we discuss the proof of Ilmanen and how to use it to prove our main theorem in Section \ref{S:Partial Regularity}.

\subsection{Monotonicity}
The monotonicity of entropy in a closed Riemannian manifold relies on a monotonicity formula proved by Hamilton in \cite{H2}. Unlike the monotonicity formula of mean curvature flow in $\R^n$ proved by Huisken in \cite{Hu}, Hamilton's monotonicity requires the ambient manifold having some curvature assumptions. Let $\cN$ be a closed Riemannian manifold and $M_t$ be a mean curvature flow in $\cN$. The monotonicity formula requires that $\cN$ has non-negative sectional curvatures and parallel Ricci curvature. In fact, in the computations to obtain the monotonicity formula, one term has to be non-negative, and it is non-negative if a Harnack estimate holds. This Harnack estimate has been studied by Hamilton in \cite{H1}, and the curvature assumption is necessary. 

In conclusion, we obtain the monotonicity of entropy along the mean curvature flow in a special family of closed manifolds.

\begin{theorem}
Suppose $\cN$ has non-negative sectional curvatures and parallel Ricci curvatures. Let $M_t$ be a mean curvature flow in $\cN$. Then the entropy $\lambda(M_t)$ is monotone non-increasing along the mean curvature flow.
\end{theorem}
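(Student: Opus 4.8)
The plan is to deduce the monotonicity of $\lambda(M_t)$, which is a supremum of heat-kernel-weighted areas over all centers $x\in\cN$ and scales $t\in\R^+$, from a \emph{pointwise} monotonicity formula for each fixed center and scale, in exactly the way Colding--Minicozzi pass from Huisken's monotonicity to the monotonicity of the Euclidean entropy. The statement replacing Huisken's formula here is Hamilton's monotonicity formula from \cite{H2}, and the whole burden of the curvature hypotheses is confined to guaranteeing that this formula holds. Concretely, I would fix a point $x_0\in\cN$ and a terminal heat-kernel time $T>0$, and consider along the flow, for $t<T$, the function $y\mapsto \cH(x_0,y,T-t)$, a positive solution of the conjugate (backward) heat equation on $\cN$ as $t$ increases toward $T$; the quantity to track is
\[
\Theta_{x_0,T}(t)=(T-t)^{(n-m)/2}\int_{M_t}\cH(x_0,y,T-t)\,dy .
\]

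The first step is to record Hamilton's result that, for a mean curvature flow in a closed ambient manifold with non-negative sectional curvatures and parallel Ricci curvature, $\Theta_{x_0,T}(t)$ is monotone non-increasing in $t$ on $[0,T)$. The derivation differentiates $\Theta_{x_0,T}$ in $t$, uses the mean curvature flow equation $\partial_t y=\bH$ together with the backward heat equation satisfied by $\cH$, and integrates by parts on $M_t$; the resulting expression contains one term whose sign is not automatic. This term is non-negative precisely because the Li--Yau--Hamilton Harnack inequality for positive solutions of the heat equation holds, and it is that Harnack inequality which forces non-negative sectional curvature and parallel Ricci curvature (\cite{H1}). I would treat this formula as an input and cite it rather than re-derive it.

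The second step is the soft supremum argument. Let $t_1<t_2$; the goal is $\lambda(M_{t_2})\le\lambda(M_{t_1})$. Given $\eps>0$, by the definition of the entropy as a supremum I choose a center $x_0\in\cN$ and a scale $s>0$ with $s^{(n-m)/2}\int_{M_{t_2}}\cH(x_0,y,s)\,dy\ge \lambda(M_{t_2})-\eps$. Setting $T=t_2+s$, the left-hand side is exactly $\Theta_{x_0,T}(t_2)$, so the first step gives $\Theta_{x_0,T}(t_1)\ge\Theta_{x_0,T}(t_2)\ge\lambda(M_{t_2})-\eps$ because $t_1<t_2<T$. Writing $s'=T-t_1=s+(t_2-t_1)>0$, the quantity $\Theta_{x_0,T}(t_1)=(s')^{(n-m)/2}\int_{M_{t_1}}\cH(x_0,y,s')\,dy$ is one of the quantities over which the supremum defining $\lambda(M_{t_1})$ is taken, so $\Theta_{x_0,T}(t_1)\le\lambda(M_{t_1})$. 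Combining these yields $\lambda(M_{t_1})\ge\lambda(M_{t_2})-\eps$, and letting $\eps\to0$ finishes the proof. The same argument reads correctly in $[0,\infty]$: if $\lambda(M_{t_2})=\infty$ one replaces $\lambda(M_{t_2})-\eps$ by an arbitrary constant $R$ and concludes $\lambda(M_{t_1})=\infty$.

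The main obstacle is entirely in the first step: ensuring Hamilton's monotonicity formula is available with the precise parabolic scaling $(T-t)^{(n-m)/2}$ that matches the definition of $\lambda$, and that the one indefinite term is genuinely killed by the Harnack estimate under these curvature assumptions. The second step is then a supremum argument requiring only positivity and smoothness of $\cH$ for $t>0$ (which also justifies differentiating under the integral in the first step) and the compactness of $M_t$; it carries no curvature content of its own.
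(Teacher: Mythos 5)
Your proposal is correct and is essentially the paper's own argument: the paper likewise specializes Hamilton's monotonicity formula (\cite{H2}, Theorem B) to the backward heat kernel $k(y,t)=\cH(x,y,T-t)$, obtaining $F_{x,s}(M_{t_2})\le F_{x,s+(t_2-t_1)}(M_{t_1})$, and then takes the supremum over $x\in\cN$ and $s\in\R^+$ to conclude $\lambda(M_{t_2})\le\lambda(M_{t_1})$. Your $\eps$-argument and the treatment of the infinite-entropy case merely spell out that final supremum step explicitly.
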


In a general Riemannian manifold, we still have an almost monotonicity formula of entropy. However when $t\to\infty$, the entropy becomes unbounded, and we cannot use it to study the limit behaviour of mean curvature flows. All the details are discussed in Section \ref{S:Monotonicity Formula}.

\subsection{Area growth}
In this paper, we will use ``area" to denote the $m$-dimensional Hausdorff measure of a $m$-dimensional submanifold, and ``volume" to denote the $n$-dimensional Hausdorff measure of the $n$-dimensional ambient Riemannian manifold. The following property is essential when we use entropy to study mean curvature flow in a closed Riemannian manifold.

\begin{theorem}\label{Thm:entropy bound area growth}
Let $\cN$ be a closed Riemannian manifold with non-negative Ricci curvature. There exists a constant $C$ only depending on the geometry of $\cN$ such that the following inequality holds: for any $m$-dimensional submanifold $\Sigma\subset\cN$,
\begin{equation}
C^{-1}\sup_{x\in\cN,r>0}\frac{\area(\Sigma\cap B_r(x))}{r^m} \leq \lambda(\Sigma)\leq C \sup_{x\in\cN,r>0}\frac{\area(\Sigma\cap B_r(x))}{r^m}.
\end{equation}
Here $B_r(x)$ is the geodesic ball in $\cN$.
\end{theorem}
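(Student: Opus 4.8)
The plan is to deduce the two-sided comparison from the Li--Yau Gaussian estimates for the heat kernel together with the Bishop--Gromov volume comparison, both of which are available because $\cN$ is closed with $\Ric \geq 0$. These estimates provide constants $c_1, c_2, C_1, C_2 > 0$, depending only on the geometry of $\cN$, such that
\[
\frac{c_1}{\vol(B_{\sqrt t}(x))}\exp\left(-\frac{\dist(x,y)^2}{c_2 t}\right) \leq \cH(x,y,t) \leq \frac{C_1}{\vol(B_{\sqrt t}(x))}\exp\left(-\frac{\dist(x,y)^2}{C_2 t}\right),
\]
while Bishop--Gromov gives $\vol(B_r(x)) \leq \omega_n r^n$ for all $r$ and $\vol(B_r(x)) \geq (\vol(\cN)/\diam(\cN)^n)\, r^n$ for $r \leq \diam(\cN)$, so that $\vol(B_{\sqrt t}(x))$ is comparable to $t^{n/2}$ on the relevant range of scales. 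The strategy is to match the heat time $t$ with the ball radius by taking $r = \sqrt t$, to show that $t^{(n-m)/2}\int_\Sigma \cH(x,y,t)\,dy$ is then comparable to $\area(\Sigma \cap B_{\sqrt t}(x))/t^{m/2}$, and to conclude by taking suprema.

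For the lower bound $\lambda(\Sigma) \geq C^{-1}\sup_{x,r}\area(\Sigma \cap B_r(x))/r^m$, I fix $x \in \cN$ and $r > 0$, set $t = r^2$, discard all of $\Sigma$ outside $B_r(x)$, and apply the Gaussian lower bound together with $\dist(x,y) < r = \sqrt t$ on $B_r(x)$ to get
\[
t^{(n-m)/2}\int_\Sigma \cH(x,y,t)\,dy \geq t^{(n-m)/2}\,\frac{c_1 e^{-1/c_2}}{\vol(B_{\sqrt t}(x))}\,\area(\Sigma \cap B_r(x)).
\]
Inserting the upper volume bound $\vol(B_{\sqrt t}(x)) \leq \omega_n t^{n/2}$ and using $t^{(n-m)/2}/t^{n/2} = r^{-m}$ turns the right-hand side into a constant multiple of $\area(\Sigma \cap B_r(x))/r^m$; taking the supremum over $x$ and $r$ finishes this direction.

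For the upper bound, write $A$ for $\sup_{x,r}\area(\Sigma \cap B_r(x))/r^m$, fix $x$ and $t$, set $r = \sqrt t$, and decompose $\Sigma$ using the central ball $A_0 = B_r(x)$ and the dyadic annuli $A_k = B_{2^k r}(x)\setminus B_{2^{k-1}r}(x)$, $k \geq 1$. On $A_k$ the Gaussian upper bound gives $\cH \leq C_1 \vol(B_{\sqrt t}(x))^{-1} e^{-4^{k-1}/C_2}$, while the definition of $A$ gives $\area(\Sigma \cap A_k) \leq \area(\Sigma \cap B_{2^k r}(x)) \leq A (2^k r)^m$. Summing the convergent series $\sum_k 2^{km} e^{-4^{k-1}/C_2}$ bounds $\int_\Sigma \cH(x,y,t)\,dy$ by a constant multiple of $A\, t^{m/2}/\vol(B_{\sqrt t}(x))$; multiplying by $t^{(n-m)/2}$ and applying the lower volume bound $\vol(B_{\sqrt t}(x)) \geq (\vol(\cN)/\diam(\cN)^n) t^{n/2}$ yields a uniform constant multiple of $A$.

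The main obstacle I anticipate is keeping the volume comparison uniform across all scales: the clean equivalence $\vol(B_{\sqrt t}(x)) \sim t^{n/2}$ holds for $\sqrt t \lesssim \diam(\cN)$, but once the geodesic ball saturates to all of $\cN$ this breaks down and the two sides must be matched with more care, so the delicate regime is genuinely the large-scale one. This is exactly where closedness and $\Ric \geq 0$ are essential, since they are what guarantee the uniform Li--Yau Gaussian constants and the polynomial volume growth used throughout; by contrast, the dyadic decomposition and the convergence of the series $\sum_k 2^{km} e^{-4^{k-1}/C_2}$ are routine once these estimates are in place.
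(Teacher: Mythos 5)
Your lower-bound direction ($\kappa(\Sigma)\le C\lambda(\Sigma)$) is correct and is essentially the paper's own argument: match scales via $t=r^2$, apply the heat kernel lower bound, discard $\Sigma\setminus B_r(x)$, and use the Bishop--Gromov upper bound $\vol(B_{\sqrt t}(x))\le\omega_n t^{n/2}$, which is valid at every scale, so that half is complete. Your upper-bound direction is, for scales $\sqrt t\lesssim\diam(\cN)$, also correct and in fact tidier than the paper's: you use dyadic annuli together with volume doubling, whereas the paper decomposes $\cN$ into width-$r$ annuli $\{kr\le r_d\le (k+1)r\}$, covers each by roughly $k^{n-1}$ balls of radius $r$ (counted via Bishop--Gromov and Croke's volume lower bound below the injectivity radius), and sums $\sum_k k^{n-1}e^{-Ck^2}$; the two decompositions do the same job.

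The gap is exactly the one you flagged yourself, and you should know it is not something ``more care'' can fix: for $\sqrt t$ larger than $\diam(\cN)$ the inequality $\lambda(\Sigma)\le C\kappa(\Sigma)$ is false as stated. On a closed manifold the heat kernel does not decay in time: spectrally, $\cH(x,y,t)=\vol(\cN)^{-1}+O(e^{-\mu_1 t})$, where $\mu_1>0$ is the first nonzero eigenvalue of the Laplacian, so for any $m$-dimensional $\Sigma$ with $m<n$ and positive area,
\[
F_{x,t}(\Sigma)=t^{(n-m)/2}\int_\Sigma \cH(x,y,t)\,dy\;\sim\;t^{(n-m)/2}\,\frac{\area(\Sigma)}{\vol(\cN)}\;\longrightarrow\;\infty\qquad(t\to\infty),
\]
hence $\lambda(\Sigma)=+\infty$ if the supremum truly ranges over all $t\in\R^+$, while $\kappa(\Sigma)<\infty$. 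The paper's own proof conceals this in its Case 1 ($r\ge r_{in}/2$): the chain $F_{x,r^2}(\Sigma)\le C\,r^{n-m}r_{in}^{-n}\area(\Sigma)\le C\area(\Sigma)D^{n-m}$ (with $D=\diam(\cN)$) tacitly uses $r^{n-m}\le CD^{n-m}$, which fails once $r\gg D$; the Gaussian factor cannot rescue it because $r_d(x,y)\le D$ forces $e^{-r_d^2/(5r^2)}\ge e^{-D^2/(5r^2)}\to 1$. So your diagnosis that the large-scale regime is the genuinely delicate one is precisely right: the theorem is true --- and both your argument and the paper's become complete --- only after capping the scale, e.g.\ restricting the supremum defining $\lambda$ to $t\le\diam(\cN)^2$; at such scales your volume lower bound $\vol(B_{\sqrt t}(x))\ge(\vol(\cN)/\diam(\cN)^n)\,t^{n/2}$ holds and your dyadic-annuli estimate closes the argument.
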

We call $\sup_{x\in\cN,r>0}\frac{\area(\Sigma\cap B_r(x))}{r^m}$ the area growth of a submanifold $\Sigma$. In other words, the area growth and the entropy of a submanifold bound each other in a Ricci non-negative closed Riemannian manifold. 

In the Euclidean space $\R^n$, this fact has been realized by Colding-Minicozzi in \cite{CM1}. In $\R^n$, the Gaussian density function can be viewed as a regularization of the characteristic function of a ball, so the integral of the Gaussian density is very similar to the integral of a characteristic function of a ball. This is the intuition about the equivalence of the entropy and the area growth.

There are many studies about the heat kernel in a closed Riemannian manifold. In \cite{CY}, Cheeger-Yau studied the lower bound of the heat kernel on a closed Riemannian manifold. Later in an influential paper \cite{LY} by Li-Yau, a very precise upper bound and lower bound of heat kernel on a closed Riemannian manifold with non-negative Ricci curvature have were developed. The heat kernel on a closed Riemannian manifold with non-negative Ricci curvature is bounded by some Gaussian distribution from above and below. Therefore, the entropy on a closed Riemannian manifold with non-negative Ricci curvature is also equivalent to the area growth. We show this in Section \ref{S:Area Growth}.

We want to mention that the relation between Perelman's entropy and the volume growth of Ricci flow has been studied in \cite{Pe} by Perelman and in \cite{Ni} by Ni. 

\subsection*{Acknowledgement}
The author wants to thank Professor Bill Minicozzi for his helpful comments. The author is also grateful to Zhichao Wang, Jinxin Xue and Xin Zhou for the invaluable discussions. Finally, the author thanks the anonymous referees for the comments and suggestions.

\section{Entropy}
Let us first recall some basic properties of the heat kernel in a closed or complete Riemannian manifold.

\begin{theorem}[\cite{SY}, Section 3 Theorem 2.1]
Let $\cN$ be a closed or complete Riemannian manifold. There exists a heat kernel $\cH(x,y,t)\in C^\infty(\cN\times\cN\times \R^+)$ satisfying the following properties:
\begin{enumerate}
\item $\cH(x,y,t)=\cH(y,x,t)$,
\item $\lim_{t\to\ 0^+}\cH(x,y,t)=\delta_x(y)$,
\item $\partial_t\cH=\Delta_y \cH$,
\item $\cH(x,y,t)=\int \cH(x,z,t-s)\cH(z,y,s)dz$.
\end{enumerate}
\end{theorem}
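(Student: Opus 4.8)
The plan is to treat the closed and complete cases separately, since the cleanest construction differs between them. For a closed $\cN$ I would build $\cH$ from the spectral decomposition of the Laplacian, while for a complete (possibly noncompact) $\cN$ I would exhaust $\cN$ by relatively compact domains, solve the Dirichlet problem on each, and pass to a monotone limit. In both cases the four listed properties are the easy part; the work is in the convergence and regularity of the construction.

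For the closed case, $-\Delta$ is a non-negative self-adjoint operator on $L^2(\cN)$ with compact resolvent, so by the spectral theorem it has discrete spectrum $0=\lambda_0\le\lambda_1\le\cdots\to\infty$ and an $L^2$-orthonormal basis of smooth eigenfunctions $\phi_i$ with $\Delta\phi_i=-\lambda_i\phi_i$. I would then set
\begin{equation}
\cH(x,y,t)=\sum_{i=0}^\infty e^{-\lambda_i t}\phi_i(x)\phi_i(y),
\end{equation}
and check the four properties directly. Symmetry (1) is immediate from the symmetry of the summand. The heat equation (3) holds termwise, since $\Delta_y\bigl(e^{-\lambda_i t}\phi_i(x)\phi_i(y)\bigr)=-\lambda_i e^{-\lambda_i t}\phi_i(x)\phi_i(y)=\partial_t\bigl(e^{-\lambda_i t}\phi_i(x)\phi_i(y)\bigr)$. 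The initial condition (2), in the distributional sense, follows because $\int_\cN\cH(x,y,t)f(y)\,dy=\sum_i e^{-\lambda_i t}\langle f,\phi_i\rangle\phi_i(x)\to\sum_i\langle f,\phi_i\rangle\phi_i(x)=f(x)$ for smooth $f$, i.e. $\cH(x,\cdot,t)\to\delta_x$. The semigroup law (4) drops out of orthonormality: $\int_\cN\cH(x,z,t-s)\cH(z,y,s)\,dz=\sum_{i,j}e^{-\lambda_i(t-s)}e^{-\lambda_j s}\phi_i(x)\phi_j(y)\int\phi_i(z)\phi_j(z)\,dz=\sum_i e^{-\lambda_i t}\phi_i(x)\phi_i(y)$.

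For the complete case, I would fix an exhaustion $\Omega_1\Subset\Omega_2\Subset\cdots$ with $\bigcup_k\Omega_k=\cN$, let $\cH_k$ be the Dirichlet heat kernel on $\Omega_k$ (which exists by the spectral argument above applied to the Dirichlet Laplacian), and use the parabolic maximum principle to show $0\le\cH_k\le\cH_{k+1}$. The monotone limit $\cH=\lim_k\cH_k$ is then finite and positive; interior parabolic Schauder estimates upgrade the convergence to $C^\infty_{\loc}$ on $\cN\times\cN\times\R^+$, so the limit is smooth and inherits properties (1)--(4) from the $\cH_k$ by passing to the limit.

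The main obstacle is the regularity and convergence input, not the formal identities. In the closed case one must justify that the eigenfunction series converges in $C^\infty$: this requires Weyl's asymptotic $\lambda_i\sim c\,i^{2/n}$ together with the elliptic (Sobolev) bound $\|\phi_i\|_{C^k}\le C(1+\lambda_i)^{k/2+n/4}$, which together make $\sum_i e^{-\lambda_i t}\|\phi_i\|_{C^k}^2$ converge for every $t>0$ and every $k$. In the complete case the delicate points are the uniform interior estimates needed to carry regularity through the limit and the fact that the limit is independent of the exhaustion; both follow from interior parabolic estimates and the uniqueness of the minimal positive fundamental solution. A uniform alternative for both cases is the Minakshisundaram--Pleijel parametrix $P_N(x,y,t)=(4\pi t)^{-n/2}e^{-d(x,y)^2/4t}\sum_{j=0}^N u_j(x,y)t^j$, with $u_j$ determined by the transport equations, corrected into an exact kernel by Levi's (Duhamel) iteration; this route also yields the short-time asymptotics that make (2) transparent, and there the obstacle is controlling the Volterra series globally, which again reduces to compactness or the same exhaustion argument.
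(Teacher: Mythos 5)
This statement is not proved in the paper at all: it is imported verbatim as background, with a citation to Schoen--Yau [SY, Section 3, Theorem 2.1], so there is no internal argument to compare against. Your proposal is, in effect, a reconstruction of the standard textbook proof, and it is essentially correct: the spectral expansion $\cH(x,y,t)=\sum_i e^{-\lambda_i t}\phi_i(x)\phi_i(y)$ for the closed case (with Weyl asymptotics plus the Sobolev bound $\|\phi_i\|_{C^k}\leq C(1+\lambda_i)^{k/2+n/4}$ justifying $C^\infty$ convergence and termwise differentiation), the exhaustion by Dirichlet kernels for the complete case, and the Minakshisundaram--Pleijel parametrix as a unified alternative are exactly the classical routes, and the verifications you give of properties (1)--(4) are sound. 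Since the cited source itself proceeds along these lines, your argument is consistent with, rather than divergent from, what the paper relies on.

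One step in the complete case deserves more care than you give it. Monotonicity $0\leq\cH_k\leq\cH_{k+1}$ alone does not make the limit finite, and parabolic Schauder estimates cannot be invoked until you already have locally uniform sup bounds on the $\cH_k$; as written, your logic gets the order backwards. The standard repair: the Dirichlet kernels satisfy the mass bound $\int_{\Omega_k}\cH_k(x,z,t)\,dz\leq 1$, and the parabolic mean value inequality converts this uniform space-time $L^1$ control into locally uniform pointwise bounds on compact subsets of $\cN\times\cN\times\R^+$; only then do Schauder estimates give $C^\infty_{\loc}$ subconvergence, after which monotonicity identifies the full limit, and properties (2) and (4) pass to it by monotone convergence. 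With that insertion the proof is complete.
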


We define the backward heat kernel
\[\rho_{y,T}(x,t)=\cH(x,y,T-t).\]
Then $\rho_{y,T}$ satisfies the backward heat equation for $t\in(-\infty,T)$. We follow Colding-Minicozzi's idea in \cite{CM1} to define $F$-functional and entropy.

\begin{definition}
Let $\cN$ be a $n$-dimensional Riemannian manifold and $\Sigma\subset\cN$ be a $m$-dimensional submanifold. Given $(x,t)\in \cN\times \R^+ $, we define
\begin{equation}
F_{x,t}(\Sigma)=t^{(n-m)/2}\int_\Sigma \rho_{x,t}(y,0)dy=t^{(n-m)/2}\int_\Sigma \cH(x,y,t)dy.
\end{equation}

We define the {\bf entropy} $\lambda$ of $\Sigma$ to be 
\begin{equation}
\lambda(\Sigma)=\sup_{x\in M,t\in\R^+}F_{x,t}(\Sigma).
\end{equation}
\end{definition}

\bigskip
Next let us discuss some basic properties of the entropy. If the ambient space is the Euclidean space $\R^n$, these properties have been studied by Colding-Minicozzi in \cite{CM1}.

We will use $\lambda^\cN$ to denote the entropy of a submanifold in $\cN$ if we want to specify the ambient space $\cN$.
 
\begin{proposition}[Product property]
Let $\cN$ be a $n$-dimensional Riemannian manifold and $\cN'$ be a $n'$-dimensional Riemannian manifold. Let $M\subset\cN$ be a $m$-dimensional submanifold and $M'\subset\cN'$ be a $m'$-dimensional submanifold. Then
\begin{equation}
\lambda^{\cN\times\cN'}(M\times M')=\lambda^{\cN}(M)\lambda^{\cN'}(M').
\end{equation}
\end{proposition}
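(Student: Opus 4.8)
The plan is to reduce the statement to the fact that the heat kernel on a Riemannian product factors. The key step is to establish
\[
\cH^{\cN\times\cN'}\big((x,x'),(y,y'),t\big)=\cH^{\cN}(x,y,t)\,\cH^{\cN'}(x',y',t).
\]
I would prove this by checking that the right-hand side is a heat kernel on $\cN\times\cN'$ and then appealing to the uniqueness in the existence theorem quoted above. For the product metric the Laplacian splits as $\Delta_{\cN\times\cN'}=\Delta_{\cN}+\Delta_{\cN'}$, with each summand acting only on the corresponding factor; hence the product solves $\partial_t u=\Delta_{(y,y')}u$ by the product rule, it converges to $\delta_x\otimes\delta_{x'}=\delta_{(x,x')}$ as $t\to0^+$, and symmetry together with the semigroup identity is inherited factorwise. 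Uniqueness then forces the product to be $\cH^{\cN\times\cN'}$.

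Given this, the computation of the $F$-functional on the product is routine. Since $\dim(\cN\times\cN')=n+n'$ and $\dim(M\times M')=m+m'$, the codimension is additive and the normalizing factor splits, $t^{((n+n')-(m+m'))/2}=t^{(n-m)/2}\,t^{(n'-m')/2}$. The product metric induces on $M\times M'$ the direct sum of the two induced metrics, so its $(m+m')$-dimensional measure is the product measure $dy\,dy'$. Combining these with the kernel factorization and Fubini's theorem gives, for every common $(x,x',t)$,
\[
F^{\cN\times\cN'}_{(x,x'),t}(M\times M')=F^{\cN}_{x,t}(M)\,F^{\cN'}_{x',t}(M').
\]

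It then remains to pass to the supremum. Writing $a(t)=\sup_{x}F^{\cN}_{x,t}(M)$ and $b(t)=\sup_{x'}F^{\cN'}_{x',t}(M')$, and using that the supremum of a product of nonnegative functions $g(x)h(x')$ over $(x,x')$ equals $(\sup g)(\sup h)$, the identity above yields $\lambda^{\cN\times\cN'}(M\times M')=\sup_{t}a(t)b(t)$, whereas $\lambda^{\cN}(M)=\sup_{t}a(t)$ and $\lambda^{\cN'}(M')=\sup_{t}b(t)$. The inequality $\lambda^{\cN\times\cN'}(M\times M')\le\lambda^{\cN}(M)\,\lambda^{\cN'}(M')$ is immediate, since $a(t)b(t)\le(\sup_s a)(\sup_s b)$ for each fixed $t$.

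The reverse inequality is the step I expect to be the main obstacle, and it is the only point that is not purely formal. The product $F$-functional couples the two factors through a single common time $t$, while the entropies $\lambda^{\cN}(M)$ and $\lambda^{\cN'}(M')$ may be realized at different optimal scales, so a priori $\sup_t a(t)b(t)$ can fall short of $(\sup_t a)(\sup_t b)$. To obtain equality one must synchronize the scales at which the two suprema are (nearly) attained, i.e.\ produce a common sequence of times $t_i$ along which $a(t_i)\to\sup a$ and $b(t_i)\to\sup b$ simultaneously. This is automatic in favorable situations --- when one factor is scale-invariant so that its sup-function is constant, or when the two factors are isometric and the maximizers coincide by symmetry --- and in the Euclidean model it follows from dilation invariance. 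Carrying out this synchronization, or identifying the hypotheses under which it holds, in the curved ambient setting is the delicate part of the argument.
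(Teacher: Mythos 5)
Your proposal follows the same route as the paper's own proof, which consists entirely of the heat kernel factorization
\[
\cH^{\cN\times\cN'}((x,x'),(y,y'),t)=\cH^{\cN}(x,y,t)\,\cH^{\cN'}(x',y',t)
\]
followed by the words ``this is an easy consequence.'' Your verification of that identity (via uniqueness; on a complete noncompact factor one should work with the minimal heat kernel, but that is routine) and of the resulting factorization $F^{\cN\times\cN'}_{(x,x'),t}(M\times M')=F^{\cN}_{x,t}(M)\,F^{\cN'}_{x',t}(M')$ is more careful than the paper's, and it is correct.

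However, the obstacle you isolate at the last step is a genuine gap --- not in your write-up, but in the proposition and in the paper's proof of it. Since both factors of the $F$-functional are evaluated at one common time, the factorization gives only
\[
\lambda^{\cN\times\cN'}(M\times M')=\sup_{t>0}a(t)\,b(t)\ \le\ \Bigl(\sup_{t>0}a(t)\Bigr)\Bigl(\sup_{t>0}b(t)\Bigr)=\lambda^{\cN}(M)\,\lambda^{\cN'}(M'),
\]
in your notation, and the inequality is strict in general; in particular your hope that dilation invariance settles the Euclidean model is misplaced, because a dilation rescales both factors simultaneously. Concretely, take $\cN=\cN'=\R^2$ (permitted, since the definition covers complete manifolds; there the paper's entropy is $(4\pi)^{(m-n)/2}$ times the Colding--Minicozzi entropy, a normalization consistent with products, so one may compute with the latter), and let $M=S^1_r$, $M'=S^1_R$ be round circles with $r\ll R$. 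Each has entropy $\sqrt{2\pi/e}$, attained at the scale $t$ comparable to the squared radius. For $t\le\eps R^2$ one has $b(t)\le 1+\delta(\eps)$ with $\delta(\eps)$ small (at such scales the large circle is nearly a line, whose $F$-functional is identically $1$; split the circle into the arc near the center point and the far part), while for $t\ge\eps R^2$ one has the crude bound $a(t)\le\sqrt{\pi}\,r/\sqrt{t}\le\sqrt{\pi/\eps}\,(r/R)$. Hence for $r/R$ small, $\sup_t a(t)b(t)\le(1+\delta)\sqrt{2\pi/e}$, which is strictly less than $\lambda(S^1_r)\,\lambda(S^1_R)=2\pi/e$. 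So equality fails precisely because the two optimal scales cannot be synchronized, and the paper's ``easy consequence'' silently interchanges the supremum in $t$ of a product with the product of the suprema in $t$.

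What survives is what you actually proved: the inequality $\lambda^{\cN\times\cN'}(M\times M')\le\lambda^{\cN}(M)\,\lambda^{\cN'}(M')$ holds always, with equality under a synchronization hypothesis --- for instance if one factor's function $b(t)$ is constant in $t$ (as for $M'=\R^{m'}\subset\R^{n'}$, recovering $\lambda(M\times\R^{m'})=\lambda(M)$), or if $a$ and $b$ attain their suprema at a common time (as for products of Euclidean self-shrinkers, both optimized at $t=1$). Two side remarks: in the closed ambient case the stated identity is vacuous for a different reason, namely $\cH(x,y,t)\to 1/\vol(\cN)$ as $t\to\infty$ forces $F_{x,t}(M)\gtrsim t^{(n-m)/2}\to\infty$, so both sides are infinite; and the proposition is not invoked in the proofs of the paper's main theorems, so the gap does not propagate. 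The correct move is to restate the proposition as an inequality, or to add the synchronization hypothesis explicitly.
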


\begin{proof}
This is an easy consequence of the fact that on a product Riemannian manifold, the heat kernel satisfies the product property: suppose $x,y\in\cN$, $x',y'\in\cN'$, then
\begin{equation}
\cH^{\cN\times\cN'}((x,x'),(y,y'),t)=\cH^{\cN}(x,y,t)\cH^{\cN'}(x',y',t).
\end{equation} 
\end{proof}

\begin{proposition}[Group action invariance]
Suppose $G$ is an isometric group action on $\cN$, then
\begin{equation}
\lambda(M)=\lambda(G(M)).
\end{equation}
\end{proposition}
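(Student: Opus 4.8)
The plan is to reduce everything to a single fact: the heat kernel $\cH$ is invariant under isometries. I would first establish that for any $g$ in the isometry group $G$ and any $x,y\in\cN$, $t\in\R^+$, one has $\cH(gx,gy,t)=\cH(x,y,t)$. This is a consequence of the uniqueness of the heat kernel together with the fact that the Laplace--Beltrami operator commutes with isometries: if $\cH$ solves $\partial_t\cH=\Delta_y\cH$ with $\lim_{t\to 0^+}\cH(x,\cdot,t)=\delta_x$, then the map $(x,y,t)\mapsto\cH(gx,gy,t)$ solves the same initial-value problem (since $g$ preserves $\Delta$ and carries $\delta_x$ to $\delta_{gx}$), and hence coincides with $\cH$ by uniqueness.

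Next, I would fix an element $g\in G$, write $g(M)$ for its image, and compute $\lambda(g(M))$ directly from the definition. Performing the change of variables $y=g(z)$ with $z$ ranging over $M$ in the defining integral, and using that an isometry preserves the $m$-dimensional Hausdorff measure so that $dy=dz$, the integral over $g(M)$ becomes $\int_M \cH(x,g(z),t)\,dz$. Applying the isometry invariance of the heat kernel established above rewrites the integrand as $\cH(g^{-1}x,z,t)$, so that $t^{(n-m)/2}\int_{g(M)}\cH(x,y,t)\,dy = t^{(n-m)/2}\int_M \cH(g^{-1}x,z,t)\,dz$.

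Finally, I would take the supremum over $(x,t)$. Substituting $x'=g^{-1}x$ and noting that $x'$ ranges over all of $\cN$ as $x$ does, because $g$ is a bijection of $\cN$, the supremum is unchanged, which yields $\lambda(g(M))=\lambda(M)$; since $g\in G$ was arbitrary, this gives the claimed invariance.

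The only genuine input is the isometry invariance of $\cH$; the remainder is a bookkeeping change of variables. I would expect the main (and rather minor) obstacle to be justifying this invariance cleanly, either by invoking uniqueness of the heat kernel from the cited existence theorem, or by appealing to the intrinsic, coordinate-free construction of $\cH$ from the spectral data of $\Delta$, from which invariance under isometries is immediate.
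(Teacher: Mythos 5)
Your proposal is correct and is essentially the paper's own argument, written out in full: the paper's one-line proof (that $G$ induces an action on the space of $F$-functionals, whose supremum defines the entropy) is exactly your identity $F_{x,t}(g(M))=F_{g^{-1}x,t}(M)$ followed by taking the supremum over $(x,t)$. The only detail you add is the justification of heat kernel invariance under isometries via uniqueness, which the paper leaves implicit and which is standard for closed (or complete, using the minimal heat kernel) manifolds.
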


\begin{proof}
This is an easy consequence of the fact that the entropy is the supremum among all $F$-functionals, and $G$ induces a natural group action on the space of all $F$-functionals.
\end{proof}

\section{Monotonicity Formula}\label{S:Monotonicity Formula}
We first recall the monotonicity formula of mean curvature flow in a closed Riemannian manifold by Hamilton \cite{H2}. We say a function $k$ is a backward solution to the heat equation on a Riemannian manifold $\cN$ if it solves the equation
\[\partial_t k=-\Delta k.\]

\begin{theorem}[\cite{H2}, Theorem B]\label{Thm:Hamilton's monotonicity}
Let $M_t$ be a $m$-dimensional mean curvature flow in a $n$-dimensional closed Riemannian manifold $\cN$, defined on $0\leq t<T$. If $k$ is any positive backward solution to the scalar heat equation on $\cN$ with $\int_{\cN} k =1$, then the quantity
\[Z(t)=(T-t)^{(m-n)/2}\int_{M_t}k d\mu_t\]
is monotone decreasing in $t$ when $\cN$ is Ricci parallel with non-negative sectional curvatures. On a general $\cN$ we have
\[Z(t)\leq C Z(s)+C(t-s)A_0\]
whenever $T-1\leq s\leq t\leq T$, where $A_0$ is the initial area at time $0$ and $C$ is a constant depending only on the geometry of $\cN$.
\end{theorem}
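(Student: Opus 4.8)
The plan is to differentiate $Z$ directly and show the derivative is non-positive; the entire content is one integration-by-parts identity followed by a differential Harnack estimate. First I would record the evolution of the weighted area. Along the flow $\partial_t x=\bH$ the first variation formula gives $\frac{d}{dt}d\mu_t=-|\bH|^2 d\mu_t$, so for the time-dependent function $k$ on $\cN$,
\[
\frac{d}{dt}\int_{M_t}k\,d\mu_t=\int_{M_t}\Big(\partial_t k+\langle\nabla k,\bH\rangle-|\bH|^2 k\Big)\,d\mu_t .
\]
Then I would substitute the backward heat equation $\partial_t k=-\Delta k$ together with the submanifold decomposition of the ambient Laplacian
\[
\Delta k=\Delta_{M_t}k+\sum_{\alpha}\Hess k(\nu_\alpha,\nu_\alpha)-\langle\nabla k,\bH\rangle,
\]
where $\{\nu_\alpha\}_{\alpha=1}^{n-m}$ is an orthonormal frame of the normal bundle. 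Since each $M_t$ is closed, the intrinsic Laplacian term integrates to zero, leaving
\[
\frac{d}{dt}\int_{M_t}k\,d\mu_t=\int_{M_t}\Big(2\langle\nabla k,\bH\rangle-|\bH|^2 k-\sum_\alpha\Hess k(\nu_\alpha,\nu_\alpha)\Big)\,d\mu_t .
\]

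Next I would complete the square in $\bH$. Writing $(\nabla k)^\perp=\sum_\alpha(\partial_{\nu_\alpha}k)\nu_\alpha$ for the normal part of the gradient and using $k>0$ to divide,
\[
2\langle\nabla k,\bH\rangle-|\bH|^2 k=-k\Big|\bH-\tfrac{(\nabla k)^\perp}{k}\Big|^2+\sum_\alpha\frac{(\partial_{\nu_\alpha}k)^2}{k},
\]
so the integrand splits into the manifestly non-positive ``Huisken term'' $-k|\bH-(\nabla k)^\perp/k|^2$ plus a remainder $\sum_\alpha\big(\tfrac{(\partial_{\nu_\alpha}k)^2}{k}-\Hess k(\nu_\alpha,\nu_\alpha)\big)$. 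Everything now reduces to bounding this remainder above by $\tfrac{n-m}{2(T-t)}\,k$.

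This last bound is exactly a differential Harnack estimate. Setting $\tau=T-t$ and $u(\cdot,\tau)=k(\cdot,T-\tau)$ converts the backward solution $k$ into a positive forward solution $\partial_\tau u=\Delta u$ with the same spatial derivatives. The matrix Li--Yau--Hamilton inequality of \cite{H1}, valid precisely under the curvature hypotheses of the theorem (non-negative sectional curvatures and parallel Ricci curvature), states that
\[
\Hess u-\frac{\nabla u\otimes\nabla u}{u}+\frac{u}{2\tau}\,g\ \ge\ 0
\]
as symmetric two-tensors. Contracting with each unit normal $\nu_\alpha$ and summing yields $\sum_\alpha\big(\tfrac{(\partial_{\nu_\alpha}k)^2}{k}-\Hess k(\nu_\alpha,\nu_\alpha)\big)\le\tfrac{n-m}{2(T-t)}k$, hence $\frac{d}{dt}\int_{M_t}k\,d\mu_t\le\tfrac{n-m}{2(T-t)}\int_{M_t}k\,d\mu_t$. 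The power of $(T-t)$ in the definition of $Z$ is chosen precisely so that the logarithmic derivative of the prefactor equals $-\tfrac{n-m}{2(T-t)}$, cancelling the Harnack term exactly and giving $Z'(t)\le 0$. I expect the genuine obstacle to be the Harnack inequality itself: its proof is a maximum-principle argument on the evolution of the tensor $\Hess\log u$, and it is there---and only there---that the curvature hypotheses are used essentially; the rest is bookkeeping.

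For the second, unconditional estimate the matrix Harnack inequality is unavailable, and I would replace it by a lossy Li--Yau type gradient estimate that holds on any closed $\cN$ over a time interval of unit length, schematically $\sum_\alpha\big(\tfrac{(\partial_{\nu_\alpha}k)^2}{k}-\Hess k(\nu_\alpha,\nu_\alpha)\big)\le\tfrac{n-m}{2(T-t)}k+(\text{curvature error})$, the constraint $T-1\le t<T$ absorbing the short-time nature of the estimate. Feeding this into the identity above produces a differential inequality of Gronwall type. Since the error terms depend only on curvature bounds of $\cN$ and are controlled, over the unit-length interval, by the area $\area(M_t)\le A_0$ (which is non-increasing along the flow by the first variation formula), integrating over $[s,t]\subset[T-1,T]$ yields the claimed bound $Z(t)\le C Z(s)+C(t-s)A_0$ with $C$ depending only on the geometry of $\cN$.
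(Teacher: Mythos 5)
Your argument is essentially Hamilton's proof, which is also all the paper offers: the theorem is quoted from \cite{H2}, and the paper's accompanying remark records exactly your structure (first variation plus the backward heat equation, a completed square producing the Huisken term, and a Harnack form that is non-negative by \cite{H1} under the curvature hypotheses). Two points of comparison. First, you contract the Harnack tensor over the normal bundle and pair it with the Huisken term $k\left|\bH-(\nabla k)^\perp/k\right|^2$ built from the normal projection of $\nabla k$; Hamilton, and the paper's remark, instead keep the full gradient $Dk/k$ in the Huisken term and contract the Harnack form tangentially. The two versions differ by the tangential term $|(\nabla k)^\top|^2/k$, which is exactly the difference of the two squares, so they are equivalent; yours is the cleaner, Huisken-style bookkeeping. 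Second, a discrepancy you should not pass over silently: your cancellation requires the prefactor $(T-t)^{(n-m)/2}$, whose logarithmic derivative is $-\tfrac{n-m}{2(T-t)}$, whereas the statement as printed has $(T-t)^{(m-n)/2}$, whose logarithmic derivative has the opposite sign. With the printed exponent the theorem is false (test a totally geodesic flat sub-torus of a flat torus against the backward heat kernel: that $Z$ increases in $t$); the exponent $(m-n)/2$ is a typo, inconsistent with the paper's own normalization $t^{(n-m)/2}$ in the definition of the $F$-functional, and your proof implicitly corrects it --- say so explicitly rather than asserting that the stated power is ``chosen precisely'' to give the cancellation. Finally, your treatment of the general-$\cN$ inequality is only a sketch: the precise form of the weak Harnack estimate valid on an arbitrary closed manifold, and its absorption via Gronwall into the shape $CZ(s)+C(t-s)A_0$, is where the real remaining work lies; this matches the level of detail of the paper, which says only that the general case follows from a weak version of the Harnack inequality.
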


\begin{remark}
The curvature condition is used in the computation of the monotonicity formula. Actually we have
\begin{equation}
\frac{d}{dt}Z(t)= -(T-t)^{(m-n)/2}\int_{M_t}\left|\vec H -\frac{Dk}{k}\right|^2 k d\mu_t-(T-t)^{(m-n)/2}\int_{M_t}Qd\mu_t,
\end{equation}
where $Q$ is the Harnack form

\begin{equation}
Q=g^{\alpha\beta}\left(D_\alpha D_\beta l-\frac{D_\alpha k D_\beta k}{k}+\frac{1}{(T-t)}kg_{\alpha\beta}\right).
\end{equation}

Here we use indices $\alpha$, $\beta$, $\cdots$ for local coordinates on $M_t$ to write $Q$ as Hamilton did in \cite{H2}. Hamilton proved that $Q\geq 0$ if $\cN$ has non-negative sectional curvatures and parallel Ricci curvatures, see \cite{H1}. The general case follows from a weak version of the Harnack inequality.
\end{remark}

Then by adapting Hamilton's monotonicity formula, we obtain the following monotonicity of $F$-functional and entropy.
\begin{theorem}\label{Thm:entropy monotonicity}
Suppose $\cN$ has non-negative sectional curvatures and parallel Ricci curvatures. Let $M_t$ be a mean curvature flow in $\cN$. Then for any $x\in \cN$, $t_1\leq t_2$, we have
\begin{equation}
F_{x,s}(M_{t_2})\leq F_{x,s+(t_2-t_1)}(M_{t_1})
\end{equation}
if the terms in the inequality are well-defined. Taking supremum among $x\in\cN$ and $s\in\R^+$ gives the monotonicity of entropy

\begin{equation}
\lambda(M_{t_2})\leq \lambda(M_{t_1}).
\end{equation}
\end{theorem}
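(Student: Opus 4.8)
The plan is to choose the ambient backward heat kernel as the test function in Hamilton's monotonicity formula (Theorem~\ref{Thm:Hamilton's monotonicity}) and to recognize the resulting monotone quantity as an $F$-functional. Fix a point $x\in\cN$, a scale $s>0$, and times $t_1\le t_2$, and set $T=t_2+s$. Then $s=T-t_2$ and $s+(t_2-t_1)=T-t_1$, so the pointwise inequality to be proved is exactly
\[
F_{x,T-t_2}(M_{t_2})\le F_{x,T-t_1}(M_{t_1}).
\]

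First I would verify that $k(z,t):=\cH(x,z,T-t)=\rho_{x,T}(z,t)$ is an admissible choice in Hamilton's formula on the interval $[t_1,t_2]$. Since $t_2<T$, the pole at $t=T$ is never reached, so $k$ is smooth and strictly positive on $[t_1,t_2]$, and by construction $\rho_{x,T}$ solves the backward heat equation. Because $\cN$ is closed the heat flow conserves mass, hence $\int_\cN k(z,t)\,dz=\int_\cN\cH(x,z,T-t)\,dz=1$ for every $t<T$, which is precisely the normalization required by Theorem~\ref{Thm:Hamilton's monotonicity}. Under the stated hypotheses (non-negative sectional curvatures and parallel Ricci curvature) the Harnack form $Q$ is non-negative, so the quantity $Z(t)=(T-t)^{(n-m)/2}\int_{M_t}k\,d\mu_t$ is monotone non-increasing on $[t_1,t_2]$.

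Now I would read off the identification. Substituting $k=\cH(x,\cdot,T-t)$ into $Z$ and comparing with the definition of the $F$-functional gives $Z(t)=F_{x,T-t}(M_t)$, so the monotonicity $Z(t_2)\le Z(t_1)$ is exactly the desired inequality. To pass from the $F$-functional to the entropy, take the supremum over $x\in\cN$ and $s>0$. The left-hand supremum is $\lambda(M_{t_2})$ by definition. For the right-hand side, as $s$ runs over $(0,\infty)$ the shifted scale $\tau=s+(t_2-t_1)$ runs over $(t_2-t_1,\infty)\subset(0,\infty)$, so
\[
\sup_{x\in\cN,\,s>0}F_{x,s+(t_2-t_1)}(M_{t_1})\le\sup_{x\in\cN,\,\tau>0}F_{x,\tau}(M_{t_1})=\lambda(M_{t_1}),
\]
and therefore $\lambda(M_{t_2})\le\lambda(M_{t_1})$.

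Since the analytic content is carried entirely by Hamilton's formula, the only genuine point to check is the exact identification $Z(t)=F_{x,T-t}(M_t)$: one must match the normalizing powers of $(T-t)$ in the two definitions and confirm that it is the $n$-dimensional ambient heat kernel $\cH(x,\cdot,T-t)$, not an $m$-dimensional Gaussian, that plays the role of $k$. The companion fact $\int_\cN\cH=1$ is where the closedness of $\cN$ enters, and the clause ``if the terms are well-defined'' absorbs the integrability needed for the $F$-functionals to be finite; the remaining work is the elementary bookkeeping of the parameter ranges in the supremum.
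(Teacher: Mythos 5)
Your proof is correct and follows exactly the paper's (implicit) argument: take $k=\rho_{x,T}$ with $T=t_2+s$ as the test function in Hamilton's monotonicity formula (Theorem~\ref{Thm:Hamilton's monotonicity}), identify $Z(t)=F_{x,T-t}(M_t)$, and pass to suprema using the shift $\tau=s+(t_2-t_1)$. The only point worth flagging is that your normalization $(T-t)^{(n-m)/2}$ is the one consistent with the paper's definition of $F_{x,t}$ (and with Hamilton's original statement); the exponent $(m-n)/2$ appearing in the paper's transcription of Theorem B is a typo, and you handled this correctly by insisting on matching the powers of $(T-t)$.
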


Note that if the ambient Riemannian manifold is $\R^{n}$, then the above monotonicity formula is just the famous monotonicity formula by Huisken, see \cite{Hu} and \cite{CM1}.

We also have the following monotonicity formula in a general closed Riemannian manifold.

\begin{theorem}
Let $\cN$ be a closed Riemannian manifold, and let $M_t$ be a mean curvature flow in $\cN$. Then for any $x\in\cN$, $t_2-1 \leq t_1\leq t_2$, we have
\begin{equation}
F_{x,s}(M_{t_2})\leq C F_{x,s+(t_2-t_1)}(M_{t_1}) +C(t_2-t_1)A_0,
\end{equation}
if the terms in the inequality are well-defined. Taking supremum among $x\in\cN$ and $s\in\R^+$ gives 
\begin{equation}
\lambda(M_{t_2})\leq C\lambda(M_{t_1})+C(t_2-t_1)A_0.
\end{equation}
\end{theorem}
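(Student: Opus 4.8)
The plan is to rerun the derivation behind Theorem \ref{Thm:entropy monotonicity}, feeding in the second, general-ambient assertion of Hamilton's formula (Theorem \ref{Thm:Hamilton's monotonicity}) in place of the exact monotonicity of $Z$. Fix $x\in\cN$ and a scale $s\in\R^+$, and set the target time $T=t_2+s$. As the weight in Hamilton's formula take the backward heat kernel $k(y,\tau)=\cH(x,y,T-\tau)$: it solves $\partial_\tau k=-\Delta k$, it is positive, and since the heat kernel conserves total mass on a closed manifold we have $\int_\cN k(\cdot,\tau)\,dy=1$ for every $\tau<T$, so it is an admissible weight. With this choice Hamilton's functional and the paper's $F$ differ only by a normalization: a direct comparison of the exponents gives $F_{x,T-\tau}(M_\tau)=(T-\tau)^{\,n-m}Z(\tau)$, so that $Z(t_2)$ corresponds to $F_{x,s}(M_{t_2})$ and $Z(t_1)$ to $F_{x,s+(t_2-t_1)}(M_{t_1})$.

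I would then apply the general estimate $Z(t_2)\le C\,Z(t_1)+C(t_2-t_1)A_0$ on the unit-length window secured by the hypothesis $t_2-1\le t_1\le t_2$, and convert it back to $F$. The scaling factor enters only favourably, since the exponent $n-m$ is non-negative and $s\le s+(t_2-t_1)$, exactly as in Theorem \ref{Thm:entropy monotonicity}; thus the monotonicity term converts cleanly to $C\,F_{x,s+(t_2-t_1)}(M_{t_1})$. The error contribution uses only that area is non-increasing along the flow, $\tfrac{d}{d\tau}\area(M_\tau)=-\int_{M_\tau}|\vec H|^2\,d\mu_\tau\le0$, so every intermediate area, and hence Hamilton's error, is controlled by the single constant $A_0=\area(M_0)$. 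Taking the supremum over $x\in\cN$ and $s\in\R^+$ then produces the entropy inequality, the additive error being independent of $x$ and $s$.

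The delicate point — the very one the introduction flags by noting that the entropy becomes unbounded in the general case — is the uniformity of $C$ in the scale $s$. Hamilton's general estimate is genuinely a statement about a unit window approaching the target time $T=t_2+s$, so it only reaches scales $s$ of order one; after multiplying through by $(T-\tau)^{\,n-m}=s^{\,n-m}$ the naive error becomes $C\,s^{\,n-m}(t_2-t_1)A_0$, harmless for bounded $s$ but not uniformly. To cover the remaining large scales I would argue directly rather than through Hamilton: for $s$ bounded below, the two-sided heat-kernel bounds of Li–Yau (the same input used for Theorem \ref{Thm:entropy bound area growth}), and for very large $s$ the equilibration $\cH(x,\cdot,s)\to 1/\vol(\cN)$, give $F_{x,s}(M_{t_2})$ and $F_{x,s+(t_2-t_1)}(M_{t_1})$ two-sided comparisons with $\area(M_{t_2})$ and $\area(M_{t_1})$, with constants depending only on the geometry; the desired inequality (with no additive error on these scales) then follows from $\area(M_{t_2})\le\area(M_{t_1})$ together with $s\le s+(t_2-t_1)$. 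Matching the small-scale Hamilton estimate to this large-scale comparison, and absorbing the bounded factor $s^{\,n-m}$ over the transition range into $C$, yields the inequality for every $s$. The heart of the argument is therefore the reduction to Hamilton's formula; the real work is the scale-by-scale bookkeeping that keeps the constant purely geometric and the error term in the stated form.
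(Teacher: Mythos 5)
Your core reduction is exactly the intended one (the paper itself states this theorem without proof, as a direct consequence of Theorem \ref{Thm:Hamilton's monotonicity}): take $k(y,\tau)=\cH(x,y,T-\tau)$ with $T=t_2+s$, which is positive, solves the backward heat equation, and has unit mass, and translate Hamilton's general estimate for $Z$ into an estimate for $F$. You also correctly identify the point the paper glosses over: Hamilton's general estimate is stated only on the window $T-1\leq t_1\leq t_2\leq T$, which under this translation means $s+(t_2-t_1)\leq 1$, whereas the theorem --- and in particular the entropy inequality obtained by taking the supremum over \emph{all} $s\in\R^+$ --- requires every scale. Your large-scale argument is sound in substance: once $s+(t_2-t_1)$ is bounded below, both kernels are comparable to constants, and the inequality (with no additive error) follows from $\area(M_{t_2})\leq\area(M_{t_1})$ and $s\leq s+(t_2-t_1)$. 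One normalization remark: the factor $(T-\tau)^{\,n-m}$ you carry through is an artifact of a sign typo in the paper's transcription of Hamilton's $Z$; for $Z$ to reduce to Huisken's quantity in $\R^n$ and to coincide with $F_{x,T-\tau}(M_\tau)$, the exponent must be $(n-m)/2$, so in fact $F_{x,T-\tau}(M_\tau)=Z(\tau)$ up to a universal constant and none of that bookkeeping is needed.

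Two genuine problems remain in your write-up. First, you invoke the Li--Yau bounds (Theorem \ref{Thm:Heat kernel bound}) for scales bounded below; those bounds require non-negative Ricci curvature, which is precisely \emph{not} assumed in this theorem, so leaning on them silently reintroduces a curvature hypothesis. Fortunately they are not needed: on any closed manifold one has $0<c(\delta)\leq\cH(x,y,t)\leq C(\delta)<\infty$ for all $t\geq\delta$, by positivity and compactness on a bounded time interval together with the exponential equilibration $\cH(x,\cdot,t)\to 1/\vol(\cN)$. Second, your ``matching'' of the two regimes has a hole: when $t_2-t_1$ is close to $1$ there are scales with $s$ small (so the two-sided kernel bounds give no control on $F_{x,s}(M_{t_2})$) yet $s+(t_2-t_1)>1$ (so Hamilton's window is exceeded). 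This case needs an explicit chaining argument, for instance: apply Hamilton between $t_2$ and the intermediate time $t_1''=t_2+s-1$, for which the window $[T-1,T]$ is exactly respected, and then estimate
$Z(t_1'')=F_{x,1}(M_{t_1''})\leq C\area(M_{t_1''})\leq C\area(M_{t_1})\leq C'\,F_{x,s+(t_2-t_1)}(M_{t_1})$
using the bounded-time kernel bounds and area monotonicity, noting $t_2-t_1''=1-s\leq t_2-t_1$ so the additive error keeps the stated form; alternatively, split $[t_1,t_2]$ at its midpoint, observe that each half falls into one of the two settled regimes, and apply the inequality twice. With these two repairs your scheme gives a complete proof --- indeed one more complete than the paper's, which never addresses the window restriction at all.
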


So far we obtain some nice monotonicity formula for the entropy. However, if the entropy of a closed embedded submanifold is infinite, then the monotonicity is meaningless. We will prove in the next section that the entropy of a closed embedded submanifold in a closed Ricci non-negative Riemannian manifold is always finite. See Proposition \ref{Prop:entropy is finite}.
 
\section{Area Growth}\label{S:Area Growth}
The goal of this section is to prove Theorem \ref{Thm:entropy bound area growth}, which says that the entropy can bound the area growth of a submanifold. In particular, together with monotonicity of entropy, we obtain a uniform area growth bound for mean curvature flow. Throughout this section, the constant $C$ varies line to line, but only depends on the geometry of $\cN$.

We use the notation that $B_r(x)$ is a geodesic ball in $\cN$

\begin{definition}
The {\bf area growth bound} $\kappa$ of a $m$-dimensional submanifold $\Sigma$ is 
\[\kappa(\Sigma)= \sup_{x\in \cN,r>0}\frac{\area(\Sigma\cap B_r(x))}{r^m}\]
\end{definition}

Before we prove Theorem \ref{Thm:entropy bound area growth}, let us recall the lower bound of the heat kernel on a Riemannian manifold with non-negative Ricci curvature. We use $V_x(r)$ to denote the volume of the geodesic ball $B_r(x)$, and we use $r_d$ to denote the distance function on $\cN$.

\begin{theorem}[\cite{LY}, Section 3 \& Section 4]\label{Thm:Heat kernel bound}
Let $\cN$ be a closed Riemannian manifold with non-negative Ricci curvature. The heat kernel $\cH$ on $\cN$ satisfies the following inequalities
\begin{equation}
\cH(x,y,t)\geq C V_x^{-\frac{1}{2}}(\sqrt{t})V_y^{-\frac{1}{2}}(\sqrt{t})e^{-\frac{r_d^2(x,y)}{3t}},
\end{equation}

and for any $\epsilon\in(0,1)$,
\begin{equation}
\cH(x,y,t)\leq C(\epsilon)V_x^{-\frac{1}{2}}(\sqrt{t})V_y^{-\frac{1}{2}}(\sqrt{t})e^{-\frac{r_d^2(x,y)}{(4+\epsilon)t}}.
\end{equation}
\end{theorem}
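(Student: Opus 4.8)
The plan is to deduce both inequalities from the Li--Yau differential Harnack (gradient) estimate, which is where the hypothesis $\Ric\geq 0$ enters. Concretely, the central claim is that every positive solution $u$ of the heat equation $\partial_t u=\Delta u$ on $\cN$ satisfies the sharp gradient bound
\[
\frac{|\nabla u|^2}{u^2}-\frac{\partial_t u}{u}\leq \frac{n}{2t},\qquad t>0.
\]
Once this is available, the two heat kernel estimates follow by a now-standard chain: integrate the gradient estimate to obtain a parabolic Harnack inequality, use conservation of mass and the semigroup property to pin down the on-diagonal behaviour $\cH(x,x,t)\asymp V_x(\sqrt t)^{-1}$, and finally propagate off the diagonal using volume comparison.

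To establish the gradient estimate I would set $f=\log u$, so that $\partial_t f=\Delta f+|\nabla f|^2$, and study $F=t(|\nabla f|^2-\partial_t f)=-t\,\Delta f$. The Bochner formula
\[
\Delta|\nabla f|^2=2|\Hess f|^2+2\langle\nabla f,\nabla\Delta f\rangle+2\Ric(\nabla f,\nabla f),
\]
the Cauchy--Schwarz bound $|\Hess f|^2\geq\tfrac1n(\Delta f)^2$, and $\Ric\geq 0$ combine, after dropping the nonnegative Ricci contribution, to give
\[
(\Delta-\partial_t)F\geq -2\langle\nabla f,\nabla F\rangle+\frac{2}{nt}F^2-\frac{F}{t}.
\]
Since $\cN$ is closed the maximum principle applies directly, with no cutoff function needed; evaluating at a point where $F$ attains its maximum on $\cN\times[0,T]$ (where $\nabla F=0$ and $(\Delta-\partial_t)F\leq 0$) forces $\tfrac{2}{nt}F^2\leq\tfrac{F}{t}$, i.e. $F\leq n/2$, which is exactly the asserted bound.

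Next I would integrate. Writing the gradient estimate as $\partial_t f\geq|\nabla f|^2-\tfrac{n}{2t}$ and evaluating along a path $\gamma$ joining $(x_1,t_1)$ to $(x_2,t_2)$ with $t_1<t_2$, completing the square via $\langle\nabla f,\dot\gamma\rangle+|\nabla f|^2\geq-\tfrac14|\dot\gamma|^2$ and minimizing the action over constant-speed geodesics yields
\[
u(x_1,t_1)\leq u(x_2,t_2)\left(\frac{t_2}{t_1}\right)^{n/2}\exp\left(\frac{r_d^2(x_1,x_2)}{4(t_2-t_1)}\right).
\]
Applying this to $u=\cH(x,\cdot,\cdot)$ together with the mass conservation $\int_\cN\cH(x,y,t)\,dy=1$ and the semigroup identity $\cH(x,x,2t)=\int_\cN\cH(x,z,t)^2\,dz$ produces the two-sided on-diagonal estimate $\cH(x,x,t)\asymp V_x(\sqrt t)^{-1}$, where volume doubling (Bishop--Gromov under $\Ric\geq 0$) controls the constants. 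The lower bound in the theorem then follows by feeding the on-diagonal lower bound into the Harnack inequality to move off the diagonal; the symmetrization into $V_x^{-1/2}V_y^{-1/2}$ and the passage from the ideal exponent $\tfrac{1}{4t}$ to $\tfrac{1}{3t}$ are paid for by the polynomial volume-comparison factor $(1+r_d/\sqrt t)^n$, which is absorbed into the slightly enlarged Gaussian.

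The main obstacle is the off-diagonal upper bound with the near-optimal exponent $\tfrac{1}{(4+\epsilon)t}$. A naive use of the Harnack inequality loses too much in the exponent, so I expect to need the integrated maximum principle: one controls a weighted energy $\int_\cN \cH(x,y,t)^2\,e^{r_d^2(x,y)/D(t)}\,dy$ for a suitable increasing function $D(t)$, shows it is essentially nonincreasing using the gradient estimate, and then optimizes the weight. Combined with the on-diagonal upper bound and volume doubling, this yields the stated Gaussian upper bound, the loss $4\to 4+\epsilon$ coming precisely from the freedom in choosing $D(t)$. This weighted $L^2$ argument, rather than the differential Harnack estimate itself, is the technically delicate heart of the proof.
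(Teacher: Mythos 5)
Your proposal is correct and is essentially a faithful reconstruction of the argument in the cited source: the paper itself offers no proof of this theorem, quoting it directly from Li--Yau \cite{LY}, whose proof proceeds exactly as you outline (sharp gradient estimate via Bochner and the maximum principle on a closed manifold, integration to the parabolic Harnack inequality, on-diagonal bounds from mass conservation and the semigroup identity, off-diagonal lower bound via Harnack, and the near-optimal Gaussian upper bound via weighted integral estimates). The only cosmetic difference is that the stated exponent $\tfrac{1}{3t}$ in the lower bound is just the Li--Yau lower bound with exponent $\tfrac{1}{(4-\epsilon)t}$ specialized to a fixed $\epsilon$, as your sketch already accommodates.
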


\begin{proof}[Proof of Theorem \ref{Thm:entropy bound area growth}]
On one hand, given $x\in \cN$, $r>0$, we have
\begin{equation}
\begin{split}
F_{x,r^2}(\Sigma)
=
\int_{\Sigma}\frac{H(x,y,r^2)}{r^{m-n}}d\mu(y)
\geq
&
C_1\int_{\Sigma}\frac{e^{-\frac{r_d^2(x,y)}{3r^2}}}{V_x(r)^{1/2}V_y(r)^{1/2}r^{m-n}}d\mu(y)\\
\geq
&
C\int_{\Sigma}\frac{e^{-\frac{r_d^2(x,y)}{3r^2}}}{r^{n}r^{m-n}}d\mu(y)\\
\geq
&
C\int_{\Sigma\cap B_r(x)}\frac{e^{-1/3}}{r^m}d\mu(y)\\
\geq 
&
C\frac{\area(\Sigma\cap B_r(x))}{r^m}.
\end{split}
\end{equation}
In the first inequality, we use the lower bound of the heat kernel; in the second inequality, we use Bishop-Gromov volume comparison inequality for a Riemannian manifold with non-negative Ricci curvature (See \cite[Section 7]{P}); in the third inequality, we restricted the integral on $\Sigma\cap B_r(x)$ and $r_d\leq r$ on this domain. By taking supremum of $x\in \cN$ and $r>0$ we get that the area growth is bounded from above by entropy.

\medskip
Next we show that the entropy is bounded from above by the area growth. Given $x\in\cN$, $r>0$. Let $r_{in}$ be the injective radius of $\cN$, $D$ be the diameter of $\cN$, and $nK>0$ be a Ricci curvature upper bound of $\cN$. Then by \cite[Proposition 14]{Cr}, for any $s\leq r_{in}/2$, $\vol(B_s(y))\geq  Cs^n$ for some dimensional constant $C$. Therefore, we have the heat kernel upper bound by taking $\epsilon=1/2$ (note $4+1/2\leq 5$):
\begin{equation}\label{Eq:Heat kernel upper bound injective radius}
\cH(x,y,r^2)\leq 
\begin{cases}
Cr^{-n} e^{-\frac{r_d^2(x,y)}{5r^2}}, \quad r < r_{in}/2;
\\
C r_{in}^{-n} e^{-\frac{r_d^2(x,y)}{5r^2}},\quad r\geq r_{in}/2.
\end{cases}
\end{equation}

We have two cases. 

\medskip
{\bf Case 1.} If $r\geq r_{in}/2$, then 
\begin{equation}
F_{x,r^2}(\Sigma)
\leq 
C\int_\Sigma \frac{e^{-\frac{r_d^2(x,y)}{5r^2}}}{r_{in}^{n} r^{m-n}}d\mu(y)
\leq 
C\area(\Sigma)D^{n-m}\leq C\kappa(\Sigma).
\end{equation}

\medskip
{\bf Case 2.} If $r < r_{in}/2$, let us pick a family of balls $\cB=\{B_r(x_1),\cdots,B_r(x_l)\}$ satisfies the following properties:
\begin{itemize}
\item $B_{r/2}(x_i)$ does not intersect $B_{r/2}(x_j)$ for $i\neq j$;
\item $\cup_{i=1}^lB_r(x_i)$ covers $\cN$.
\end{itemize}

For the sake of simplicity, we will use $B_i$ to denote the ball $B_{r/2}(x_i)$, and we will use $2B_i$ to denote the ball $B_r(x_i)$. Let us use $A_k$ to denote the annulus
\[
A_k=\{y\in \cN: kr\leq d_{x,y} \leq (k+1)r.\}
\]
By Bishop-Gromov inequality, $\vol(A_k)\leq Ck^{n-1}r^n$, where $n$ only depends on the dimension $n$. Note $2r<r_{in}$, So we can use \cite[Proposition 14]{Cr} again to show that $\vol{2B_i}\geq C r^{n}$. Therefore, $A_k$ is covered by at most $Ck^{n-1}$ number of $2B_i$. We will use $2B_j^k$ to denote the balls $\in\cB$ covering $A_k$. Then we can estimate
\begin{equation}
\begin{split}
F_{x,r^2}(\Sigma)
=&
\int_\Sigma \frac{\cH(x,y,r^2)}{r^{m-n}}d\mu(y)
\\
\leq
&
C \int_\Sigma \frac{e^{-\frac{r_d^2(x,y)}{5r^2}}}{r^n r^{m-n}}d\mu(y)
\\
\leq
&
C\sum_{k=0}^\infty \int_{\Sigma\cap A_k} \frac{e^{-\frac{r_d^2(x,y)}{5r^2}}}{r^m}d\mu(y).
\end{split}
\end{equation}
In the first inequality we use the upper bound of the heat kernel and note $r<r_{in}/2$, see (\ref{Eq:Heat kernel upper bound injective radius}).

A standard argument shows that
\[-r_d^2(x,y)\leq - (r_d(x,x_j)-r_d(y,x_j))^2\leq -\frac{1}{2}r_d^2(x,x_j)+r_d^2(y,x_j). \tag{$\star$}\]
Then we can estimate
\begin{equation}
\begin{split}
\int_{\Sigma\cap A_k} \frac{e^{-\frac{r_d^2(x,y)}{5r^2}}}{r^m}d\mu(y)
\leq &
Cr^{-m}\sum_{j}\int_{\Sigma\cap B_j^k}e^{-\frac{r_d^2(x,x_j)}{10r^2}}d\mu(y)\\
\leq &
C\kappa(\Sigma) \sum_j e^{-\frac{r_d^2(x,x_j)}{10r^2}}\\
\leq &
C\kappa(\Sigma) k^{n-1}e^{-Ck^2}.
\end{split}
\end{equation}
In the first inequality, we use ($\star$); in the third inequality we use the bound on the number of balls covering $A_k$, and note that $r_d(x,x_i)\geq k(r-1)$, and we use ($\star$) again.

Thus, we have
\[F_{x,r^2}(\Sigma)\leq C\kappa(\Sigma)\sum_{k=0}^\infty k^{n-1}e^{-Ck^2}\leq C\kappa(\Sigma)\]
when $r< r_{in}/2$. Combining two cases and taking supremum among $x\in\cN$, $r>0$ gives
\[\lambda(\Sigma)\leq C \kappa(\Sigma).\]
\end{proof}

Together with the monotonicity formula, Theorem \ref{Thm:entropy bound area growth} gives the following area growth bound of mean curvature flows.

\begin{corollary}\label{Cor:area growth bound of MCF}
Suppose $\cN$ is a closed Riemannian manifold with non-negative sectional curvatures and parallel Ricci curvatures. Let $\{M_t\}_{t\in[0,T)}$ be a $m$-dimensional mean curvature flow in $\cN$. Then there is a constant $C$ only depending on the initial entropy such that
\begin{equation}
\kappa(M_t)\leq C
\end{equation}
for any $t\in[0,T)$.
\end{corollary}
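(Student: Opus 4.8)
The plan is to deduce this corollary by combining the two main results already established in the excerpt: the monotonicity of the entropy along the flow (Theorem \ref{Thm:entropy monotonicity}) and the two-sided equivalence between entropy and area growth (Theorem \ref{Thm:entropy bound area growth}). The corollary is essentially immediate once one checks that both of these inputs are available under the stated hypotheses. First I would verify the compatibility of the curvature assumptions. Theorem \ref{Thm:entropy monotonicity} requires $\cN$ to have non-negative sectional curvatures and parallel Ricci curvature, which is exactly what the corollary assumes, while Theorem \ref{Thm:entropy bound area growth} requires only non-negative Ricci curvature; since non-negative sectional curvature implies non-negative Ricci curvature by tracing, this weaker hypothesis is automatically satisfied. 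Thus both theorems apply to $\cN$, and in particular the entropy/area-growth equivalence applies to the submanifold $M_t$ at every time $t\in[0,T)$.

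Next I would chain the estimates. For each fixed $t$, the upper-bound half of Theorem \ref{Thm:entropy bound area growth} gives $\kappa(M_t)\leq C\lambda(M_t)$, where $C$ depends only on the geometry of $\cN$. By the monotonicity in Theorem \ref{Thm:entropy monotonicity}, $\lambda(M_t)\leq\lambda(M_0)$ for all $t\in[0,T)$. Combining these two inequalities yields
\[
\kappa(M_t)\leq C\lambda(M_t)\leq C\lambda(M_0),
\]
which is the desired uniform bound, with constant $C\lambda(M_0)$ depending only on the geometry of $\cN$ and on the initial entropy $\lambda(M_0)$. Finally I would record that $\lambda(M_0)$ is finite: assuming $M_0$ is a closed embedded submanifold of the closed Ricci non-negative manifold $\cN$, its entropy is finite by Proposition \ref{Prop:entropy is finite}, so the right-hand side is a genuine constant.

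There is no real analytic obstacle here; the substance of the statement is entirely contained in the two theorems it invokes, and the only thing requiring care is confirming that the curvature hypotheses of the corollary simultaneously imply the hypotheses needed both for the monotonicity formula and for the entropy/area-growth equivalence. The one point I would state explicitly, so that the conclusion is truly a \emph{uniform} bound, is that the equivalence constant $C$ in Theorem \ref{Thm:entropy bound area growth} is independent of $t$; this is what allows the single time-monotonicity estimate $\lambda(M_t)\leq\lambda(M_0)$ to produce a bound that does not degenerate as $t\to T$.
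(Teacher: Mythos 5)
Your proof is correct and is essentially the paper's own argument: the paper's proof is the one-line citation ``By Theorem \ref{Thm:entropy monotonicity} and Theorem \ref{Thm:entropy bound area growth},'' and you have simply filled in the chain $\kappa(M_t)\leq C\lambda(M_t)\leq C\lambda(M_0)$ together with the (correct) observations that non-negative sectional curvature implies non-negative Ricci curvature and that $\lambda(M_0)<\infty$ by Proposition \ref{Prop:entropy is finite}.
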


\begin{proof}
By Theorem \ref{Thm:entropy monotonicity} and Theorem \ref{Thm:entropy bound area growth}.
\end{proof}

For a general closed Riemannian manifold, we can still get an area growth bound, but it depends on the time interval on which the mean curvature flow exists.

\begin{corollary}\label{Cor:area growth bound of MCF weak version}
Suppose $\cN$ is a closed Riemannian manifold. Let $\{M_t\}_{t\in[0,T)}$ be a $m$-dimensional mean curvature flow in $\cN$. Then there is a constant $C$ only depending on the initial entropy and $T$ such that
\begin{equation}
\kappa(M_t)\leq C
\end{equation}
for any $t\in[0,T)$.
\end{corollary}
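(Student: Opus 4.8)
The plan is to imitate the proof of Corollary \ref{Cor:area growth bound of MCF}, replacing its two ingredients by their weaker general-ambient counterparts: the strong monotonicity of Theorem \ref{Thm:entropy monotonicity} is replaced by the almost-monotonicity of the $F$-functional valid on any closed $\cN$ (the inequality $F_{x,s}(M_{t_2})\leq C F_{x,s+(t_2-t_1)}(M_{t_1})+C(t_2-t_1)A_0$ for $t_2-1\leq t_1\leq t_2$), and the clean equivalence of Theorem \ref{Thm:entropy bound area growth} is replaced by a bounded-scale version. The one genuine subtlety is that, without $\Ric\geq 0$, the full entropy $\lambda$ is in fact infinite for a closed submanifold: as $t\to\infty$ one has $\cH(x,\cdot,t)\to \vol(\cN)^{-1}$ while $t^{(n-m)/2}\to\infty$. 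Hence throughout I work with the finite quantities $\kappa$ and with $F_{x,s}$ only at bounded scales $s$; the ``initial entropy'' in the statement is read as the initial area growth $\kappa(M_0)$, the correct finite replacement (the two agree up to a constant precisely when $\Ric\geq 0$).

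First I would fix a scale $r_0>0$ depending only on the geometry of $\cN$ (say $r_0$ below half the injectivity radius) and split the supremum defining $\kappa(M_t)$ into the regimes $r\geq r_0$ and $r<r_0$. For $r\geq r_0$ one has the crude bound $\area(M_t\cap B_r(x))/r^m\leq \area(M_t)/r_0^m$; since mean curvature flow decreases area, $\frac{d}{dt}\area(M_t)=-\int_{M_t}|\vec H|^2\,d\mu_t\leq 0$, so $\area(M_t)\leq A_0:=\area(M_0)$ and this piece is bounded by $A_0/r_0^m$. For $r<r_0$ I would reuse the first computation in the proof of Theorem \ref{Thm:entropy bound area growth}: on a compact manifold the short-time heat kernel obeys a Gaussian lower bound $\cH(x,y,r^2)\geq C r^{-n}e^{-r_d^2(x,y)/(cr^2)}$ for $r\leq r_0$ (from the Minakshisundaram--Pleijel expansion, or Cheeger--Yau \cite{CY} using that $\Ric$ is bounded below on the compact $\cN$), with $C,c$ depending only on geometry. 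This yields $\area(M_t\cap B_r(x))/r^m\leq C F_{x,r^2}(M_t)$ for all $r\leq r_0$, whence
\[
\kappa(M_t)\leq C\sup_{x\in\cN,\ s\leq r_0^2}F_{x,s}(M_t)+A_0/r_0^m.
\]

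It then remains to bound $\sup_{x,\,s\leq r_0^2}F_{x,s}(M_t)$ for $t<T$. Here I would iterate the general-ambient almost-monotonicity formula: partition $[0,t]$ into $N\leq\lceil T\rceil$ subintervals of length $\leq 1$ and apply the inequality on each, the scale growing by the length of the subinterval at every step. Telescoping over the $N$ steps gives, for every $s\leq r_0^2$,
\[
F_{x,s}(M_t)\leq C^{N}F_{x,s+t}(M_0)+C^{N}T A_0\leq C(T)\Big(\sup_{x,\ \sigma\leq r_0^2+T}F_{x,\sigma}(M_0)+A_0\Big),
\]
using $s+t\leq r_0^2+T$. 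The initial bounded-scale functional is in turn controlled by $\kappa(M_0)$ by running the Case~1/Case~2 heat-kernel-upper-bound argument from the proof of Theorem \ref{Thm:entropy bound area growth}, now only over the bounded range $\sigma\leq r_0^2+T$, which yields $\sup_{x,\sigma\leq r_0^2+T}F_{x,\sigma}(M_0)\leq C(T)\kappa(M_0)$ (a Gaussian upper bound $\cH(x,y,\sigma)\leq C\sigma^{-n}e^{-r_d^2/(c\sigma)}$ holds on this range with constants depending on geometry and $T$). Combining the three displays, and using $A_0=\area(M_0)\leq \diam(\cN)^m\,\kappa(M_0)$, produces $\kappa(M_t)\leq C(T)\kappa(M_0)$ uniformly in $t\in[0,T)$.

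The main obstacle is the heat-kernel input. Unlike the $\Ric\geq 0$ case, where Theorem \ref{Thm:Heat kernel bound} supplies clean two-sided Gaussian bounds at all scales, here I must produce matching upper and lower Gaussian bounds on a general closed $\cN$ over the bounded scale window $[0,r_0^2+T]$, with constants controlled by the geometry and by $T$. This is exactly where the restriction to the finite interval $[0,T)$, and the resulting $T$-dependence of the constant, enters: the Li--Yau estimates under only a lower Ricci bound carry unavoidable factors like $e^{CKt}$ that are bounded only on bounded time intervals. The rest is the routine but slightly delicate bookkeeping of the iteration and of the two scale regimes.
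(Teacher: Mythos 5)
Your proof is correct, and it takes a genuinely different --- and more self-contained --- route than the paper. The paper states this corollary with no proof at all; the intended argument, by analogy with Corollary \ref{Cor:area growth bound of MCF}, is to combine the almost-monotonicity of entropy on a general closed manifold (the second, unlabeled theorem of Section \ref{S:Monotonicity Formula}) with Theorem \ref{Thm:entropy bound area growth}. But Theorem \ref{Thm:entropy bound area growth} is proved only under $\Ric\geq 0$ (its heat-kernel input, Theorem \ref{Thm:Heat kernel bound}, is the Li--Yau estimate), a limitation the paper itself concedes in the remark following this corollary, so the paper's implicit route does not actually cover a general closed $\cN$. Your argument supplies exactly the missing ingredients: two-sided Gaussian heat-kernel bounds valid on any compact manifold over a bounded time window, with constants depending on the geometry and on $T$ (which is precisely where the $T$-dependence of the constant belongs); iteration of the $F$-functional almost-monotonicity over unit time steps; and plain area monotonicity under the flow to dispose of the large-radius regime. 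What the paper's intended approach buys is brevity under a curvature hypothesis; what yours buys is an actual proof in the stated generality. Your observation on the divergence of $\lambda$ is also correct and in fact stronger than you state: on \emph{any} closed $\cN$ the heat kernel tends to $\vol(\cN)^{-1}$ as $t\to\infty$, so $t^{(n-m)/2}\int_\Sigma\cH\,d\mu\to\infty$ regardless of the sign of the Ricci curvature; hence the bounded-scale quantities you work with are the robust ones, and your conclusion $\kappa(M_t)\leq C(T)\kappa(M_0)$ still implies the corollary as literally stated, since the bounded-scale heat-kernel lower bound alone gives $\kappa(M_0)\leq C\lambda(M_0)$ whenever the latter is finite. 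Two cosmetic slips: the upper bound on the range $\sigma\leq r_0^2+T$ should read $\cH(x,y,\sigma)\leq C\sigma^{-n/2}e^{-r_d^2(x,y)/(c\sigma)}$ (the exponent $-n$ is correct only when the time variable is written as $r^2$), and your final constant also depends on the geometry of $\cN$, as it implicitly does in Corollary \ref{Cor:area growth bound of MCF} as well.
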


\begin{remark}
In order to apply the monotonicity formula of entropy to get the area growth bound, we require the ambient Riemannian manifold having non-negative sectional curvatures, hence we have proved Theorem \ref{Thm:entropy bound area growth} only for closed Riemannian manifolds with non-negative Ricci curvature. 

Nevertheless, Theorem \ref{Thm:entropy bound area growth} should hold true for much larger class of Riemannian manifolds. In fact, we only use the heat kernel bound (Theorem \ref{Thm:Heat kernel bound}) to get the area growth bound. There are much research on the estimate of heat kernel bound, and some of them may be used to improve Theorem \ref{Thm:Heat kernel bound}. This is out of the scope of this paper, and we only refer the readers to some nice surveys on this topic, see \cite{G}, \cite{S}. 
\end{remark}

\begin{remark}
It is hard to imagine that Corollary \ref{Cor:area growth bound of MCF} can only hold for Riemannian manifolds with such a curvature constraint. We conjecture that Corollary \ref{Cor:area growth bound of MCF} is true without any curvature assumption.
\end{remark}

Let us conclude this section by showing that the entropy of a closed embedded submanifold in a closed Riemannian manifold $\cN$ with non-negative Ricci curvature is finite.

\begin{proposition}\label{Prop:entropy is finite}
Suppose $\cN$ is a closed Riemannian manifold with non-negative Ricci curvature. Let $\Sigma$ be a closed embedded submanifold in $\cN$. Then $\lambda(\Sigma)<\infty$.
\end{proposition}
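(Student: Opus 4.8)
The plan is to reduce the finiteness of the entropy to the finiteness of the area growth bound, and then to establish the latter using only the smoothness and compactness of $\Sigma$. Since $\cN$ has non-negative Ricci curvature, Theorem \ref{Thm:entropy bound area growth} applies and gives $\lambda(\Sigma)\leq C\kappa(\Sigma)$ with $C$ depending only on the geometry of $\cN$. Hence it suffices to prove $\kappa(\Sigma)<\infty$, that is,
\[\sup_{x\in\cN,\,r>0}\frac{\area(\Sigma\cap B_r(x))}{r^m}<\infty.\]
I would emphasize that this reduction is the only place where the curvature hypothesis enters; the area growth bound itself is a purely local geometric statement about a smooth compact submanifold.

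Since $\Sigma$ is a smooth closed embedded submanifold of the compact manifold $\cN$, it has finite total area $\area(\Sigma)<\infty$ and admits a uniform bound $\Lambda$ on the norm of its mean curvature vector. I would then invoke the monotonicity formula for submanifolds of bounded mean curvature in a Riemannian ambient space: because $\cN$ is compact, its sectional curvature and injectivity radius are bounded, so there exist constants $r_0>0$ and $C_0>0$, depending only on $\Lambda$ and the geometry of $\cN$, such that for every $x\in\cN$ the quantity $e^{C_0 r}r^{-m}\area(\Sigma\cap B_r(x))$ is non-decreasing in $r$ on $(0,r_0)$.

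With this in hand the bound is immediate. For $r<r_0$, comparing scale $r$ with scale $r_0$ gives
\[\frac{\area(\Sigma\cap B_r(x))}{r^m}\leq e^{C_0 r_0}\frac{\area(\Sigma\cap B_{r_0}(x))}{r_0^m}\leq \frac{e^{C_0 r_0}}{r_0^m}\area(\Sigma),\]
while for $r\geq r_0$ one trivially has $\area(\Sigma\cap B_r(x))/r^m\leq \area(\Sigma)/r_0^m$. Taking the supremum over $x\in\cN$ and $r>0$ yields $\kappa(\Sigma)\leq C\area(\Sigma)<\infty$, and therefore $\lambda(\Sigma)<\infty$.

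The main obstacle is the second step: producing the monotonicity, equivalently the uniform small-scale density bound, with constants independent of the center $x$. One must check that the ambient curvature enters only through a controlled exponential correction and that $r_0$ may be chosen uniformly, which is precisely where compactness of $\cN$ is used. An alternative that avoids the monotonicity formula is to use that $\Sigma$ has bounded second fundamental form: cover $\Sigma$ by finitely many neighborhoods in which it is a graph over its tangent plane with controlled gradient, and estimate $\area(\Sigma\cap B_r(x))\leq C_0 r^m$ for small $r$ directly, noting that if $\dist(x,\Sigma)>r$ the intersection is empty so only centers near $\Sigma$ matter. Either route reduces the problem to uniform local estimates.
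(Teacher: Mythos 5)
Your proposal is correct and follows essentially the same route as the paper: reduce to finiteness of the area growth $\kappa(\Sigma)$ via Theorem \ref{Thm:entropy bound area growth} (the only place non-negative Ricci curvature is used), then exploit that the compact submanifold $\Sigma$ has bounded mean curvature and apply a monotonicity formula to bound $\area(\Sigma\cap B_r(x))/r^m$ uniformly in $x$ and $r$. The only difference is in implementation: the paper obtains the monotonicity extrinsically, by isometrically embedding $\cN$ into $\R^N$, noting the mean curvature of $\Sigma$ in $\R^N$ is still bounded, and quoting the Euclidean formula of \cite[Section 17]{Si} together with a comparison of intrinsic and extrinsic balls, whereas you invoke the intrinsic Riemannian monotonicity formula with exponential correction at scales below a uniform $r_0$ --- both are standard and valid, and your two-case comparison of $r<r_0$ versus $r\geq r_0$ correctly finishes the argument.
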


\begin{proof}
By Theorem \ref{Thm:entropy bound area growth}, we only need to show the area growth of $\Sigma$ is finite. Since $\Sigma$ is compact, it has bounded mean curvature. Moreover, if $\cN$ is isometrically embedded into $\R^N$, the mean curvature of $\Sigma$ in $\R^N$ is also bounded. Then a monotonicity of the area growth and a comparison of intrinsic and extrinsic balls gives the finiteness of the area growth bound of $\Sigma$ (see \cite[Section 17]{Si}).
\end{proof}

\section{Partial Regularity}\label{S:Partial Regularity}
Throughout this section, we will concentrate on the case that the submanifolds are closed embedded surfaces in a closed three Riemannian manifold $\cN$. We follow the idea of Ilmanen in \cite{I}. Let us first state an approximate graphical decomposition Lemma by Simon in \cite{Si2}.

\begin{theorem}[\cite{Si2}, Lemma 2.1; also see \cite{I}, Theorem 10]\label{Thm:Simon Lemma}
For $n\geq 3$, $D>0$, there exists $\epsilon_1=\epsilon_1(n,D)>0$ such that if $\epsilon<\epsilon_1$ and if $M$ is a smoothly embedded closed $2$-dimensional manifold in $\R^n$ such that
\begin{equation}
\int_{M\cap B_R}|A|^2 d\mu\leq \epsilon^2,\quad \area(M\cap B_R)\leq D\pi R^2,
\end{equation}
then there are pairwise disjoint closed disks $P_1,\cdots,P_N$ in $M\cap B_R$ such that
\begin{equation}\label{Eq:SiLemP}
\sum_j\diam(P_j)\leq C(n,D)\epsilon^{1/2}R,
\end{equation}
and for any $S\in[R/4,R/2]$ such that $M$ is transverse to $\partial B_S$ and $\partial B_S \cap(\cup_m P_m)=\emptyset$, we have
\[M\cap B_S=\cup_{l=1}^m D_l\]
where each $D_l$ is an embedded disk. Furthermore, for each $D_l$ there is a $2$-plane $L_l\subset\R^n$, a simply connected domain $\Omega_l\subset L_l$, disjoint closed balls $\overline{B_{l,p}}\subset\Omega_l$, $p=1,\cdots,p_l$ and a function 
\[u_l:\Omega_l\backslash \cup \overline{ B_{l,p}}\rightarrow L_l^\bot\]
such that
\begin{equation}\label{Eq:SiLem1}
\sup\left|\frac{u_l}{R}\right|+|D u_l|\leq C(n,D)\epsilon^{1/2(2n-3)}
\end{equation}
and
\begin{equation}\label{Eq:SiLem2}
D_l\backslash\cup_m P_m=\Graph(u_l|_{\Omega_l\backslash\cup_p\overline{B_{l,p}}}).
\end{equation}
\end{theorem}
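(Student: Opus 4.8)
My plan is to recover the decomposition from the one‑dimensional geometry of the slices $\gamma_S:=M\cap\partial B_S$, isolating the regions of concentrated curvature into the disks $P_j$ and representing the graphical remainder over a plane. The first step is a slicing/coarea argument. Applying the coarea formula to the restriction of $|x|$ to $M$ gives
\[
\int_{R/4}^{R/2}\mathrm{length}(\gamma_S)\,dS \leq \area(M\cap B_R)\leq D\pi R^2 ,
\]
and comparing the curvature $\kappa$ of the slice curve $\gamma_S$ (as a space curve) with the second fundamental form of $M$ and the extrinsic geometry of the spheres yields an integral–geometric bound of the form
\[
\int_{R/4}^{R/2}\int_{\gamma_S}|\kappa|^2\,ds\,dS \leq C\int_{M\cap B_R}|A|^2\,d\mu \leq C\eps^2 .
\]
By Chebyshev's inequality there is then a set $G\subset[R/4,R/2]$ of measure at least $R/8$ on which simultaneously $\mathrm{length}(\gamma_S)\leq C(D)R$ and $\int_{\gamma_S}|\kappa|^2\,ds\leq C\eps^2/R$. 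For such a good radius each component of $\gamma_S$ is a closed curve of small total curvature, hence (by Fenchel / Fary--Milnor type bounds) nearly planar and nearly circular, and the number of components is controlled by $D$.

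Next I would locate and excise the bad set. The point is that where curvature does not concentrate the tangent planes of $M$ rotate slowly, so the only obstruction to a global graph comes from the scales at which $\int|A|^2$ accumulates. I would cover this concentration set by a Besicovitch (or Vitali) family of balls chosen at the local curvature scale and take $P_1,\dots,P_N$ to be the components of $M$ inside these balls. A neck‑type lower bound forces each $P_j$ to carry a definite share of $\int|A|^2$, and a Cauchy--Schwarz estimate relating the diameter of each excised piece to the square root of the curvature it contains, summed and controlled by $\int_{M\cap B_R}|A|^2\le\eps^2$, produces the weight
\[
\sum_{j}\diam(P_j)\leq C(n,D)\,\eps^{1/2}R .
\]

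I would then produce the graphs. Fixing a good radius $S\in G$ with $M$ transverse to $\partial B_S$ and $\partial B_S\cap(\cup_m P_m)=\emptyset$, the first step gives that $M\cap B_S$ is a union of topological disks $D_l$. On $D_l\setminus\cup_m P_m$ the slow rotation of tangent planes keeps them within a small angle of a fixed $2$‑plane $L_l$ (obtained by averaging), so the orthogonal projection $\pi_{L_l}$ is a local diffeomorphism; tracking the slice curves and using that the removed pieces are small then exhibits the image as a simply connected domain $\Omega_l\subset L_l$ punctured by finitely many disjoint round holes $\overline{B_{l,p}}$ coming from the excised disks, with $D_l\setminus\cup_m P_m=\Graph\!\left(u_l|_{\Omega_l\setminus\cup_p\overline{B_{l,p}}}\right)$ for $u_l\colon\Omega_l\setminus\cup_p\overline{B_{l,p}}\to L_l^{\perp}$.

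Finally I would establish the height and gradient bound (\ref{Eq:SiLem1}). The pointwise $|Du_l|$ is the tangent of the tilt of the tangent plane against $L_l$, which is governed by the total curvature, while the height is recovered by integrating the gradient in from $\partial B_S$. I expect the main obstacle to lie precisely here, together with the second step: one must build the excised disks $P_j$ with the sharp total‑diameter weight $\eps^{1/2}R$ while guaranteeing that the complement is genuinely graphical, and then \emph{propagate} the tilt estimate uniformly across the holed domain $\Omega_l$. This propagation does not give the naive curvature bound directly; interpolating it against the Michael--Simon Sobolev inequality over the domain with holes loses a dimensional power, which is what degrades the exponent to the stated $\eps^{1/2(2n-3)}$. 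By contrast, the slicing and Chebyshev selection of the first step are comparatively routine, so the delicate interplay between the covering construction and the graphical propagation is where the real work concentrates.
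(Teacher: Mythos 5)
First, a point of order: the paper does not prove this statement at all; it is imported as a black box from Simon \cite{Si2} (Lemma 2.1) and Ilmanen \cite{I} (Theorem 10), so your attempt cannot be measured against an argument in the paper and must stand on its own. As it stands it does not, even though your outline (coarea slicing, Chebyshev selection of good radii, excision of bad disks, graphical representation of the remainder) does resemble the broad architecture of Simon's actual proof. The first concrete problem is that your integral-geometric inequality
\[
\int_{R/4}^{R/2}\int_{\gamma_S}|\kappa|^2\,ds\,dS \leq C\int_{M\cap B_R}|A|^2\,d\mu
\]
is false: the curvature of the slice $M\cap\partial B_S$ as a space curve is governed not only by $A$ but by the angle at which $M$ meets the sphere. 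A flat plane at distance $d\ll R$ from the origin has $|A|\equiv 0$, yet its slices are circles of radius $\sqrt{S^2-d^2}\approx S$, so $\int_{\gamma_S}|\kappa|^2\,ds\approx 2\pi/S$ and the left-hand side is a positive constant independent of $\eps$. What the coarea formula does control (since $|\nabla^M|x||\leq 1$) is $\int_{\gamma_S}|A|^2\,ds$ and the length of the slices; near-circularity of good slice components must then come from Gauss--Bonnet and the graphical structure, not from a space-curve curvature bound.

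The second gap is that your pimple construction is internally inconsistent: if each $P_j$ carried a ``definite share'' $\delta>0$ of $\int_{M\cap B_R}|A|^2\leq\eps^2$, then for $\eps<\sqrt{\delta}$ there could be no pimples at all, which is absurd since the decomposition genuinely requires them (small necks and handles survive the hypotheses). The bound $\sum_j\diam(P_j)\leq C\eps^{1/2}R$ cannot be extracted from curvature shares and Cauchy--Schwarz alone; one needs a diameter estimate for the cut-off pieces of the rough form $\diam(\Sigma)\leq C\bigl(\area(\Sigma)\int_\Sigma|A|^2\bigr)^{1/2}$ plus boundary-length control of the short slice components (this is Simon's Lemma 1.1, proved via the monotonicity identity with boundary terms), and you have no substitute for it. Finally, the height/gradient bound with the exponent $\eps^{1/(2(2n-3))}$ --- which you yourself flag as where ``the real work concentrates'' --- is conceded rather than proved: $L^2$-smallness of $|A|$ gives no pointwise control on the rotation of tangent planes, and the Michael--Simon interpolation you gesture at is never carried out; in Simon's argument this exponent emerges from a Poincar\'e-type inequality for the tilt over the holed domain combined with an iteration in the codimension. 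In sum, your proposal is a plausible table of contents for Simon's proof, but steps one and two would fail as written and step three is missing.
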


\bigskip
Intuitively, this theorem illustrates the following picture: imagine we have a surface in $\R^n$, intersecting a ball $B_R$, with small total curvature and area growth bound. Then in a slightly smaller ball $B_S$, it is the union of embedded disks $D_i$. Moreover, besides some bad regions $P_j$ (which are called ``pimples" by Simon), $D_i$'s are very nice $C^{1,\alpha}$ graphs. In addition, the bad regions are still topological disks, and their diameters are bounded.

Theorem \ref{Thm:Simon Lemma} is stated for surfaces in a Euclidean space rather than a closed Riemannian manifold. In order to use this theorem, we first isometrically embed $\cN$ into an Euclidean space $\R^N$. From now on we will fix such an isometric embedding, and we will use $B_r^N(x)$ to denote the ball with radius $r$ in $\R^N$. The following lemma allows us to compare the area growth bound of a surface in $\cN$ with the area growth bound in $\R^N$.

\begin{lemma}\label{Lem:intrinsic diameter and extrinsic diameter}
There exists $\tau>0$ such that for any $x\in\cN$ and $r<\tau$, $(B^N_r(x)\cap \cN)\subset B_{2r}(x)$.
\end{lemma}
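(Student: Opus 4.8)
The plan is to compare the intrinsic geodesic distance $r_d$ on $\cN$ with the ambient Euclidean distance $|\cdot|$ in $\R^N$, and to show that the two are comparable for nearby points, uniformly in the base point $x$, using the compactness of $\cN$. One inequality comes for free: since a straight segment realizes the shortest path in $\R^N$, we always have $|x-y|\le r_d(x,y)$, so in fact $B_{2r}(x)\subset B^N_{2r}(x)\cap\cN$. The content of the lemma is therefore the reverse comparison $r_d(x,y)\le 2|x-y|$ for $|x-y|$ small.

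First I would bound the second fundamental form. Because $\cN$ is compact and isometrically embedded in $\R^N$, its second fundamental form $A$ with respect to the embedding is bounded, say $|A|\le\Lambda$. Given $x,y\in\cN$, let $\gamma:[0,L]\to\cN$ be a unit-speed minimizing geodesic from $x$ to $y$, where $L=r_d(x,y)$; such a geodesic exists since $\cN$ is compact, hence complete. Viewed as a curve in $\R^N$, a geodesic of $\cN$ has Euclidean acceleration equal to the normal-valued second fundamental form, so $|\gamma''(s)|=|A(\gamma',\gamma')|\le\Lambda$. Writing $\gamma(L)-\gamma(0)=\int_0^L\gamma'(s)\,ds$ together with $\gamma'(s)=\gamma'(0)+\int_0^s\gamma''(u)\,du$ and $|\gamma'(0)|=1$, I obtain
\[
|x-y|=|\gamma(L)-\gamma(0)|\ge L-\tfrac{1}{2}\Lambda L^2=L\left(1-\tfrac{1}{2}\Lambda L\right).
\]
Consequently, whenever $r_d(x,y)=L\le 1/\Lambda$, the factor in parentheses is at least $1/2$, and hence $r_d(x,y)\le 2|x-y|$.

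The remaining, and in some sense the only genuine, point is that this bound is valid only in the regime $r_d(x,y)\le 1/\Lambda$, so I must rule out the possibility that two extrinsically close points are intrinsically far apart (a priori $\cN$ could be folded so that intrinsically distant points nearly touch in $\R^N$). This is where compactness enters: I claim there is $\tau>0$ such that $|x-y|<\tau$ forces $r_d(x,y)\le 1/\Lambda$. If not, there are $x_i,y_i\in\cN$ with $|x_i-y_i|\to 0$ but $r_d(x_i,y_i)\ge 1/\Lambda$; passing to a subsequence with $x_i\to p$ and $y_i\to q$ by compactness, the extrinsic limit forces $p=q$, while continuity of the intrinsic distance on the compact set $\cN\times\cN$ gives $r_d(x_i,y_i)\to r_d(p,p)=0$, a contradiction.

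Finally I take $\tau$ to be this radius. Then for $r<\tau$ and any $y\in B^N_r(x)\cap\cN$ we have $|x-y|<r<\tau$, so $r_d(x,y)\le 1/\Lambda$, and the geodesic estimate yields $r_d(x,y)\le 2|x-y|<2r$, i.e. $y\in B_{2r}(x)$, as desired. The main obstacle is exactly the global folding issue resolved by the compactness argument in the third paragraph; the infinitesimal comparison is elementary once the uniform curvature bound $\Lambda$ is in hand.
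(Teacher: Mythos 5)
Your proof is correct, but it takes a genuinely different route from the paper's. Both arguments start from the same point --- compactness of $\cN$ gives a uniform bound $|A|\le\Lambda$ on the second fundamental form of the embedding $\cN\subset\R^N$ --- but they diverge after that. The paper converts the curvature bound into a uniform \emph{graphical} description: there exist $\tau,\alpha,C$ such that around every $x\in\cN$ the manifold is a graph over its tangent plane $T_x\cN$ in a ball of radius $\tau$ with $C^{1,\alpha}$ norm at most $C$, and the chord-arc inequality $r_d(x,y)\le 2|x-y|$ is then read off from the small gradient of the graph after shrinking $\tau$. You instead estimate directly along a minimizing geodesic via the Gauss formula $\gamma''=A(\gamma',\gamma')$, obtaining $|x-y|\ge L\left(1-\tfrac{1}{2}\Lambda L\right)$ with $L=r_d(x,y)$, and you handle the global ``folding'' problem (extrinsically close but intrinsically far sheets) by a separate sequential compactness contradiction. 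Your version is more elementary and, on the one genuinely delicate point, more careful: the assertion that \emph{all} of $\cN\cap B^N_\tau(x)$ --- not merely the connected component through $x$ --- is a single graph over $T_x\cN$ already presupposes exactly the no-folding statement, which the paper dispatches with the phrase ``by suitably choosing smaller $\tau$,'' whereas your third paragraph actually proves it. What the paper's formulation buys in exchange is the local $C^{1,\alpha}$ graph structure itself, which is a natural companion to the graphical decompositions of Simon's lemma used throughout Section 5; what yours buys is a complete, self-contained proof with an explicit mechanism for the constant $2$ (namely the threshold $L\le 1/\Lambda$).
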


\begin{proof}
Since $\cN$ is compact, its second fundamental forms are uniformly bounded in $\R^N$. Then there exists $\tau>0$ and $\alpha>0$, $C>0$ such that at each point $x\in\cN$, $\cN$ is a smooth graph over its tangent space $T_x\cN\subset\R^N$ in a ball of radius $\tau$, and the $C^{1,\alpha}$ norm of the graph is bounded by $C$. Then by suitably choose smaller $\tau$, we will see that such a $\tau$ satisfies the statement in the lemma. 
\end{proof}

Using Lemma \ref{Lem:intrinsic diameter and extrinsic diameter}, we obtain an area bound of the disks in Simon's Theorem \ref{Thm:Simon Lemma}.

\begin{corollary}[\cite{I}, Corollary 11]\label{Cor:Area bound of disks}
Let $M\subset\cN\subset\R^N$ be a closed embedded surface. Assume the area growth $\kappa(M)\leq D$ for a constant $D$. Then under the hypotheses in Theorem \ref{Thm:Simon Lemma}, for any $x\in M\cap B_R$ such that $B_{2r}(x)\subset B_R$, the connected component $M'$ of $M$ of $M\cap B_r(x)$ containing $x$ is embedded and satisfies
\[\pi r^2(1- C\epsilon^{1/(2n-3)})\leq \area(M)\leq \pi r^2(1+C\epsilon^{1/2(2n-3)}).\]
Here $C$ is a constant only depending on $N$ and $D$.
\end{corollary}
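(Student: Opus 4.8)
The plan is to transport Simon's graphical decomposition (Theorem \ref{Thm:Simon Lemma}) down to the sub-ball and then read off the area of the central disk. First I would check that the hypotheses of Theorem \ref{Thm:Simon Lemma} persist on the Euclidean ball $B^N_{2r}(x)$: the curvature integral only decreases, $\int_{M\cap B^N_{2r}(x)}|A|^2\le\int_{M\cap B_R}|A|^2\le\epsilon^2$, while the area bound comes from the area growth hypothesis after converting the extrinsic ball to a geodesic ball by Lemma \ref{Lem:intrinsic diameter and extrinsic diameter}: for $r$ small, $B^N_{2r}(x)\cap\cN\subset B_{4r}(x)$, so $\area(M\cap B^N_{2r}(x))\le\kappa(M)(4r)^2\le D'\pi(2r)^2$ with $D'$ depending only on $D$. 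Applying Theorem \ref{Thm:Simon Lemma} with Simon-radius $2r$ and a transverse slice $S=r$ (which lies in the admissible range $[R/4,R/2]=[r/2,r]$; perturb $S$ slightly if $r$ is not transverse), the set $M\cap B^N_r(x)$ is a union of embedded disks $D_l$, and I take $M'$ to be the disk $D_l$ containing $x$, which gives embeddedness at once. Writing $\delta:=C\epsilon^{1/2(2n-3)}$ for the gradient/height bound of (\ref{Eq:SiLem1}), $M'$ is, away from the pimples $P_m$, the graph of $u$ over a simply connected domain $\Omega\subset L$ in a $2$-plane $L$, with $|Du|\le\delta$ and $\sup|u|\le\delta R=2\delta r$.

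For the pimple contribution I would argue once, uniformly for both bounds: each $P_m$ lies in a Euclidean ball of radius $\diam P_m$, which by Lemma \ref{Lem:intrinsic diameter and extrinsic diameter} sits in a geodesic ball of radius $2\diam P_m$, so the area growth bound gives $\area(P_m)\le CD(\diam P_m)^2$. Summing and using $\sum_m\diam P_m\le C\epsilon^{1/2}R$ from (\ref{Eq:SiLemP}) yields $\sum_m\area(P_m)\le CD\big(\sum_m\diam P_m\big)^2\le C\epsilon r^2$; the same estimate bounds the removed balls $\overline{B_{l,p}}$ since their diameters are controlled by the corresponding pimple diameters. As $\epsilon\le\delta^2$, both of these errors are absorbed into the $\delta^2$ error coming from the graph.

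For the upper bound I would orthogonally project $M'$ onto $L$: projection is $1$-Lipschitz and $M'\subset B^N_r(x)$, so $\Omega$ lies in the disk of radius $r$ about the projection $x_L$ of $x$, giving $\area(\Omega)\le\pi r^2$. Hence the graph part has area $\int_{\Omega}\sqrt{1+|Du|^2}\le\pi r^2\sqrt{1+\delta^2}$, and adding the pimple estimate gives $\area(M')\le\pi r^2(1+C\delta^2)\le\pi r^2(1+C\epsilon^{1/2(2n-3)})$.

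The lower bound is the delicate point, and is where I expect the main difficulty. I would first rule out that $M'$ closes up inside $B^N_r(x)$: any closed surface in $\R^N$ satisfies $\int|A|^2\ge 8\pi$ (from $\int|\vec H|^2\ge16\pi$ together with $|A|^2\ge\tfrac12|\vec H|^2$), so for $\epsilon$ small $M'$ must meet $\partial B^N_r(x)$ and $\partial M'$ lies on that sphere. Because $x$ is the \emph{center} of the ball and $x\in M'$, every boundary point $q\in\partial D_l\subset\partial B^N_r(x)$ projects to distance $\ge\sqrt{r^2-(2\delta r)^2}\ge r(1-2\delta^2)$ from $x_L$, so $\partial\Omega$ avoids the disk of radius $\rho:=r(1-2\delta^2)$ about $x_L$. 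Since $\Omega$ is simply connected and contains $x_L$, a connectedness argument (the component of the open $\rho$-disk lying in $\Omega$ and containing $x_L$ is both open and closed in that disk) forces $\Omega$ to contain the whole disk of radius $\rho$. Therefore
\[
\area(M')\ge\area\big(\Omega\setminus\textstyle\bigcup_p\overline{B_{l,p}}\big)\ge\pi\rho^2-\sum_p\area(\overline{B_{l,p}})\ge\pi r^2(1-C\delta^2)-C\epsilon r^2\ge\pi r^2\big(1-C\epsilon^{1/(2n-3)}\big),
\]
using $\delta^2=\epsilon^{1/(2n-3)}$ and the hole-area bound from the second paragraph. The crux is exactly controlling that the projected domain fills the full disk instead of a thin sliver; this rests on $x$ being the center, on the boundary of the central disk sitting at essentially the maximal radius $r$, and on Simon's simple-connectivity of $\Omega$.
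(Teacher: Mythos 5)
Your proof is correct and takes essentially the same route as the paper's (which itself sketches Ilmanen's Corollary 11): recenter Simon's decomposition (Theorem \ref{Thm:Simon Lemma}) at $x$ at scale $2r$, bound the graphical part by the area of a planar disk, and control the pimple/hole contributions by combining (\ref{Eq:SiLemP}) with Lemma \ref{Lem:intrinsic diameter and extrinsic diameter} and the area growth bound $\kappa(M)\leq D$. The connectedness argument you supply for the lower bound --- showing via the boundary-height estimate and simple connectivity that the projected domain fills the disk of radius $\approx r$ --- is precisely the detail the paper leaves implicit in the inequality $\area(M')\geq \area(\Graph(u_l|_{\Omega'}))$.
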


\begin{proof}
The lower bound comes from (\ref{Eq:SiLem1}) and (\ref{Eq:SiLem2}), and 
\[\area(M')\geq \area(\Graph(u_l|_{\Omega'})).\]
The upper bound comes from (\ref{Eq:SiLem1}) and (\ref{Eq:SiLem2}) again, and
\[\area(M')\leq \area(\Graph(u_l|_{\Omega'}))+\sum_m\area(P_m).\]
Moreover, (\ref{Eq:SiLemP}) implies that $\sum_m\diam(P_m)$ has an upper bound as diameters in $\R^N$, and Lemma \ref{Lem:intrinsic diameter and extrinsic diameter} implies that $\sum_m\diam(P_m)$ has an upper bound as diameters in $\cN$. Then the above equations together with the area growth bound give the desired upper bound of $\area(M')$.
\end{proof}

Next we recall Allard's regularity theorem.

\begin{theorem}[\cite{A}, Section 8; also see \cite{I}, Theorem 9]\label{Thm:Allard regularity}
There exists $\epsilon_2=\epsilon_2(n,k)$ and $\delta=\delta(n,k)$ with the following significance. Suppose $\mu$ is an integer $k$-rectifiable Radon measure such that $|H|\in L_{\loc}^1(\mu)$. If $\epsilon<\epsilon_2$, $0\in\spt$ and $r>0$ such that
\[|H|\leq \frac{\epsilon}{r},\quad \text{for $\mu$-a.e. $x\in B_r$},\]
and 
\[\mu(B_r)\leq (1+\epsilon)\omega_k r^k,\]
then there is a $k$-plane $T$ containing $0$ and a domain $\Omega\subset T$, and a $C^{1,\alpha}$ vector valued function $u:\Omega\to T^\bot$ such that
\[\spt \mu\cap B_{\delta r}=\Graph u\cap B_{\delta r},\]
and
\[\sup\left|\frac{u}{r}\right|+\sup|Du|+r^\alpha[Du]_\alpha\leq C\epsilon^{1/4n}.\]
Here $[Du]_\alpha=\sup\frac{|Du(x)-Du(y)|}{|x-y|^\alpha}$.
\end{theorem}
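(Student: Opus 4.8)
The plan is to follow Allard's original blow-up strategy (equivalently, the De Giorgi--Almgren harmonic approximation method). The statement is local and scale-invariant, so after translating and rescaling we may assume $0\in\spt\mu$ and $r=1$, and reduce everything to controlling the \emph{tilt-excess}
\[
E(x,\rho)=\rho^{-k}\int_{B_\rho(x)}|T_y\mu-T|^2\,d\mu(y),
\]
the $L^2$ deviation of the approximate tangent planes $T_y\mu$ of $\mu$ from a fixed reference $k$-plane $T$. The smallness hypotheses $\mu(B_r)\le(1+\epsilon)\omega_k r^k$ and $|H|\le\epsilon/r$ must first be converted into smallness of $E$. For this I would establish the monotonicity formula for varifolds with bounded first variation: the ratio $\rho\mapsto e^{C\epsilon\rho}\,\mu(B_\rho(x))/(\omega_k\rho^k)$ is nondecreasing. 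This yields a uniform upper density bound at all scales, shows the density $\Theta(x)=\lim_{\rho\to0}\mu(B_\rho(x))/(\omega_k\rho^k)$ exists and is upper semicontinuous, and—crucially—forces $\Theta\ge1$ on $\spt\mu$ while the area hypothesis forces $\Theta\le1+\epsilon$. Combined with the integral-geometric identity relating the area deficit $\mu(B_1)-\omega_k$ to the tilt, this delivers the initial bound $E(0,1)\le C\epsilon$.

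The analytic heart is the \textbf{tilt-excess decay lemma}: there exist $\theta\in(0,1/2)$ and $\epsilon_0>0$ so that if $E(0,1)$ and $\epsilon$ are sufficiently small then
\[
E(0,\theta)\le\tfrac12\,\theta^{2\alpha}E(0,1)+C\theta^{-k}\epsilon^2.
\]
I would prove this by contradiction and compactness. Assuming failure, take a sequence of varifolds with $E_j:=E(0,1)\to0$, rescale the heights over $T$ by $E_j^{-1/2}$, and show the blow-ups converge to the graph of a single harmonic function $w:B_1\cap T\to T^\perp$. Two subsidiary ingredients are needed here: a Lipschitz approximation showing that, off a set of $\mu$-measure controlled by $E$, the support $\spt\mu$ is the graph of a function with small Lipschitz constant; and the first-variation identity, which in the rescaled limit linearizes to Laplace's equation $\Delta w=0$. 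Standard interior estimates for harmonic functions then give the quadratic excess decay of $w$, contradicting the assumed failure of the lemma.

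Finally I would iterate the decay lemma over the geometric sequence of radii $\theta^j$. After absorbing the $\epsilon^2$ term (rescaling the mean-curvature contribution at each step), this yields $E(0,\theta^j)\le C(\theta^j)^{2\alpha}$, and the same estimate holds with the center varying over a neighborhood, producing a Campanato-type bound. This forces the generalized tangent plane $T_x\mu$ to depend $\alpha$-Hölder-continuously on $x$, so on the ball $B_{\delta}$ the support is a single $C^{1,\alpha}$ graph $\Graph u$ with the asserted control on $\sup|u/r|+\sup|Du|+r^\alpha[Du]_\alpha$.

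I expect the decay lemma to be the main obstacle, and within it the harmonic approximation step. One must show the rescaled graphs converge strongly enough that the weak minimal-surface (first-variation) equation passes to the limit as $\Delta w=0$, and one must simultaneously rule out concentration of mass, so that the blow-up is genuinely a single-valued graph rather than a higher-multiplicity or branched limit. This no-concentration control is exactly where the density pinching $\Theta\in[1,1+\epsilon]$ from the monotonicity formula is indispensable.
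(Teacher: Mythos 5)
The paper does not prove this theorem at all---it is quoted as a known result from Allard \cite{A} (and Ilmanen \cite{I}, Theorem 9), so there is no in-paper argument to compare against. Your outline is a faithful sketch of Allard's original proof from the cited source (monotonicity formula and density pinching $\Theta\in[1,1+\epsilon]$, Lipschitz approximation, tilt-excess decay via harmonic approximation, and Campanato-type iteration to $C^{1,\alpha}$), so it is correct and takes essentially the same approach as the reference the paper relies on.
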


The following theorem is due to Ilmanen \cite[Section 4]{I}. The similar idea has been used by Choi-Schoen in \cite{CS} to obtain the partial regularity of the limit of a sequence of minimal surfaces in a three manifold, cf. \cite{W}, \cite{CM2}. However, in \cite{CS}, \cite{W} and \cite{CM2}, all the surfaces in the sequence satisfy an equation, hence they all have nice locally graphical bounds. In Ilmanen's theorem, there is no such a graphical bound, so one needs \cite{Si2} to obtain a layer decomposition and an almost graphical bound on the limit. We sketch the proof for the sake of convenience of the readers.

\begin{theorem}\label{Thm:Partial regularity with H and area growth bound}
Let $\cN$ be a closed Riemannian manifold. Suppose $M_i\subset\cN$ is a family of closed smoothly embedded surfaces with uniformly bounded genus, and there is a constant $D$ such that 
\[\kappa(\Sigma)\leq D,\quad \int_{M_i}|H|^2d\mu_{M_i}\to 0.\]
Then there exists a subsequence (still denoted by $M_i$) converging to a varifold $V$ in the sense of varifold, and $\spt V$ is a smoothly embedded minimal surface.
\end{theorem}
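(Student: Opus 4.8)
The plan is to follow Ilmanen's strategy, feeding a uniform $L^2$ curvature bound into the two regularity inputs already stated, Simon's graphical decomposition (Theorem \ref{Thm:Simon Lemma}) and Allard's theorem (Theorem \ref{Thm:Allard regularity}).

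\emph{Compactness and stationarity.} Since $\kappa(M_i)\leq D$, the areas $\mu_{M_i}(\cN)$ are uniformly bounded, so, regarding the $M_i$ as integral $2$-varifolds, I extract a subsequence converging weakly-$*$ to an integral varifold $V$ with uniformly bounded density ratios. By Cauchy--Schwarz $\int_{M_i}|H|\leq (\int_{M_i}|H|^2)^{1/2}(\area M_i)^{1/2}\to 0$, so the first variations tend to zero and the limit satisfies $\delta V=0$; thus $V$ is a stationary (minimal) integral varifold.

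\emph{A uniform bound on $\int|A|^2$.} This is where the bounded genus enters, and it is the crux of the matter. As $\cN$ is three-dimensional, the Gauss equation for the surface $M_i$ reads $|A|^2=|H|^2+2K_\cN-2K_{M_i}$, where $K_{M_i}$ is the intrinsic Gauss curvature and $K_\cN$ the ambient sectional curvature of the tangent plane. Integrating and applying Gauss--Bonnet gives $\int_{M_i}|A|^2=\int_{M_i}|H|^2+2\int_{M_i}K_\cN-4\pi\chi(M_i)$. Here $\int_{M_i}|H|^2\to 0$, the term $2\int_{M_i}K_\cN$ is bounded since $|K_\cN|$ is bounded and $\area M_i$ is bounded, and $|\chi(M_i)|$ is bounded by the genus bound; hence $\int_{M_i}|A|^2\leq C$ uniformly. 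Fixing an isometric embedding $\cN\hookrightarrow\R^N$ and using that $\cN$ has bounded second fundamental form, this transfers to a uniform bound on the extrinsic quantity $\int_{M_i}|A^{\R^N}|^2$, while Lemma \ref{Lem:intrinsic diameter and extrinsic diameter} transfers the area growth bound to balls in $\R^N$, supplying the area hypothesis of Theorem \ref{Thm:Simon Lemma}.

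\emph{Good points versus bad points.} Passing to a further subsequence, the measures $|A_{M_i}|^2\,d\mu_{M_i}$ converge to a Radon measure $\nu$ with $\nu(\cN)\leq C$. Choosing a threshold $\epsilon_0$ dictated by the constants $\epsilon_1$ of Theorem \ref{Thm:Simon Lemma} and $\epsilon_2$ of Theorem \ref{Thm:Allard regularity}, there are at most $C/\epsilon_0^2$ \emph{bad} points $\{y_1,\dots,y_K\}$ at which $\nu$ carries mass at least $\epsilon_0^2$ in every neighborhood. At any $x_0\notin\{y_j\}$ I pick $r$ small with $\nu(B_r(x_0))<\epsilon_0^2$, so that $\int_{M_i\cap B_r^N(x_0)}|A^{\R^N}|^2<\epsilon_0^2$ for large $i$; Theorem \ref{Thm:Simon Lemma} then writes $M_i$ in a slightly smaller ball as a union of embedded disks that are $C^{1,\alpha}$ graphs off small pimples, with areas controlled by Corollary \ref{Cor:Area bound of disks}. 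In the limit $\spt V$ near $x_0$ is a finite union of nearly parallel, hence ordered, graphs; since $V$ is stationary, Allard's Theorem \ref{Thm:Allard regularity} upgrades each sheet to a $C^{1,\alpha}$ graph, and elliptic regularity for the minimal surface system bootstraps to smoothness. Ordering of the graphs prevents crossings in the limit, so $\spt V$ is smoothly embedded near $x_0$.

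\emph{Removing the bad points, and the main obstacle.} It remains to show $\spt V$ is smooth across each $y_j$. There $V$ is a stationary integral varifold which is smooth and embedded in the punctured neighborhood, with finite area and finite total curvature, so a removable-singularity theorem for minimal surfaces with an isolated point singularity (as used by Choi--Schoen) shows the singularity is removable and $\spt V$ extends smoothly across $y_j$. The hard parts are exactly the two limiting steps: controlling the pimples so that the graphical description genuinely descends to $V$ rather than leaving defects behind, and the removable-singularity analysis at the finitely many concentration points, which is what promotes the conclusion from smoothness almost everywhere to smoothness of the whole support.
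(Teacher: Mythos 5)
Your proposal is correct and follows essentially the same route as the paper's proof: genus plus Gauss--Bonnet for the uniform $\int|A|^2$ bound, transfer to $\R^N$ via an isometric embedding, a finite concentration set for the limiting curvature measure, Simon's decomposition plus Allard and elliptic regularity at good points, and a Choi--Schoen/Gulliver removable-singularity argument at the bad points. The only additions are cosmetic (you spell out the varifold compactness and stationarity step, which the paper treats separately), so there is nothing to change.
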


\begin{proof}
Let us fix an isometric embedding of $\cN$ into a Euclidean space $\R^N$, and we will use $A^N$ and $H^N$ to denote the mean curvature and the second fundamental forms of a surface in $\R^N$ respectively. Since the genus of $M_i$'s are uniformly bounded, Gauss-Bonnet theorem shows that $\int_{M_{i}}|A|^2 d\mu_{M_{i}}$ is uniformly bounded by a constant $C$. Gauss-Codazzi identity shows that $\int_{M_{i}}|A^N|^2 d\mu_{M_{i}}$ is also uniformly bounded by a constant $C$.

Define $\sigma_j=|A^N|^2\bH^2\lfloor M_i$, which is a Radon measure. Here $\bH^2$ is the $2$-dimensional Hausdorff measure in $\R^N$. Then $\sigma_j$ has uniformly bound $\sigma_j(\R^N)\leq C$, and a subsequence of $\sigma_j$ (still denoted by $\sigma_j$) converges to a limit $\sigma$ by the compactness of Radon measure. Moreover $\sigma(\R^N)\leq C$. Define $\epsilon_0=\min\{\epsilon_1,\epsilon_2\}$ in Theorem \ref{Thm:Simon Lemma} and Theorem \ref{Thm:Allard regularity}. Then we define the concentration set 
\[
\cS=\{x\in\R^N:\sigma(x)>\epsilon_0\}.
\]
Since $\sigma(\R^N)\leq C$, $\cS$ is a finite set. For any $p\not\in\cS$, there exists $\epsilon<\epsilon_0$ and $r$ sufficiently small so that
\begin{equation}
\sigma(B_r(p))<\epsilon^2.
\end{equation}
Then for $k$ sufficiently large, $\sigma_j(B_r(p))<\epsilon^2$. Then by Theorem \ref{Thm:Simon Lemma}, there are $r_k\in[r/4,r/2]$ and a decomposition of $M_k\cap B_{r_k}(p)$ into the union of disks $D_{k,l}$. Each $D_{k,l}$ is an embedded disk, and by Corollary \ref{Cor:Area bound of disks} it satisfies the area growth bound
\[
\area(D_{k,l}\cap B_{\rho}(x))\leq (1+C(n,D)\epsilon^\gamma)\rho^2,\quad x\in D_{k,l},~ |x-p|+\rho\leq r_k.
\]

Then passing to a further subsequence (still denoted by the subscription $k$) we may assume that the limit of the number of the disks are bounded by $l_0$, and $\bH^2\lfloor D_{k,l}$ converging to a limit $\nu_l$ as varifolds for $l=1,\cdots,l_0$ by the compactness of varifolds. Moreover, by the mean curvature bounds of $M_k$, $\nu_l$ weakly solves the minimal surface equation in $\cN$. Then $\nu_l$ satisfies the assumptions in Allard's regularity Theorem \ref{Thm:Allard regularity}, and we can apply Allard's regularity Theorem to show that $\nu_l\cap B_{\sigma r_\infty}(p)$ is a graph of $C^{1,\alpha}$ function $u$ defined over a domain in a $2$-plane. Furthermore, since  $\nu_l$ weakly solves the minimal surface equation, Schauder estimates shows that $u$ is actually smooth.

In conclusion, we have proved that the limit varifold $V$ of $M_k$ (after passing to a subsequence) is supported on a union of smoothly closed embedded disks in $B_{\sigma r_\infty}(p)$. Then the maximum principle of minimal surfaces shows that $V$ is supported on a closed embedded minimal surface $\Sigma$, besides $\cS$. Then a removable of isolated singularities theorem by Gulliver \cite{Gu} (also see \cite[Proposition 1]{CS}) shows that $V$ is supported on a closed embedded minimal surface $\Sigma$ in $\cN$.
\end{proof}

\section{Applications to Mean Curvature Flow}\label{S:Applications to Mean Curvature Flow}
In this section, we apply the partial regularity theorem in the previous section to mean curvature flow. Recall that a mean curvature flow is ancient if it is defined for $t\in(-\infty,0)$, and a mean curvature flow is long-time if it is defined for $t\in[0,\infty)$. The simplest examples of ancient and long-time mean curvature flows are minimal submanifolds, which are static solutions to mean curvature flow.

Ancient mean curvature flow and long-time mean curvature flow always has a weak limit as $t\to-\infty$ or $t\to \infty$ respectively.

\begin{theorem}
Suppose $M_t$ is an ancient mean curvature flow in a closed Riemannian manifold $\cN$. If $\lim_{t\to-\infty}\area(M_t)<\infty$, then there exists a sequence $t_i\to-\infty$ and a stationary varifold $V$, such that $M_{t_i}\to V$ in the sense of varifolds.

Suppose $M_t$ is a long-time mean curvature flow in a closed Riemannian manifold $\cN$. Then there exists a sequence $t_i\to\infty$ and a stationary varifold $V$, such that $M_{t_i}\to V$ in the sense of varifolds.
\end{theorem}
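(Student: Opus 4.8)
The plan is to exploit that mean curvature flow is the gradient flow of the area functional, so that the area is monotone and its total decay controls the mean curvature in an integrated $L^2$ sense. Along the flow one has
\[
\frac{d}{dt}\area(M_t)=-\int_{M_t}|H|^2\,d\mu_t\leq 0,
\]
so $t\mapsto\area(M_t)$ is non-increasing. For the long-time flow this forces $\lim_{t\to\infty}\area(M_t)$ to exist and be finite (it is bounded below by $0$), and integrating the identity over $[0,\infty)$ yields $\int_0^\infty\int_{M_t}|H|^2\,d\mu_t\,dt=\area(M_0)-\lim_{t\to\infty}\area(M_t)<\infty$. For the ancient flow the hypothesis $\lim_{t\to-\infty}\area(M_t)<\infty$ together with monotonicity keeps the area uniformly bounded, and integrating over $(-\infty,0]$ gives the analogous finiteness $\int_{-\infty}^0\int_{M_t}|H|^2\,d\mu_t\,dt<\infty$.

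Since the non-negative function $t\mapsto\int_{M_t}|H|^2\,d\mu_t$ is integrable on a half-line, its $\liminf$ as $t\to\pm\infty$ must vanish. I would therefore extract a sequence $t_i\to\infty$ (respectively $t_i\to-\infty$) with $\int_{M_{t_i}}|H|^2\,d\mu_{t_i}\to 0$. Along this sequence the areas are uniformly bounded, namely by $\area(M_0)$ in the long-time case and by $\lim_{t\to-\infty}\area(M_t)$ in the ancient case, so by the compactness theorem for varifolds of uniformly bounded mass a further subsequence of $M_{t_i}$ converges to a varifold $V$ in the sense of varifolds.

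It remains to check that $V$ is stationary. For any smooth vector field $X$ on the closed manifold $\cN$ the first variation of $M_{t_i}$ is $\delta M_{t_i}(X)=-\int_{M_{t_i}}\langle H,X\rangle\,d\mu_{t_i}$, and by Cauchy--Schwarz
\[
|\delta M_{t_i}(X)|\leq\sup_{\cN}|X|\cdot\area(M_{t_i})^{1/2}\Big(\int_{M_{t_i}}|H|^2\,d\mu_{t_i}\Big)^{1/2}\longrightarrow 0.
\]
Since the first variation is continuous under varifold convergence for each fixed $X$, passing to the limit gives $\delta V(X)=0$ for all $X$, so $V$ is stationary. No curvature assumption on $\cN$ is needed here; the only quantitative inputs are the uniform area bound feeding varifold compactness and the extraction of a sequence with vanishing $\int|H|^2$, which is the one step requiring a little care, and the stationarity then follows immediately from the Cauchy--Schwarz estimate.
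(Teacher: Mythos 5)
Your proposal is correct and follows essentially the same route as the paper's proof: area monotonicity from the first variation identity, extraction of a sequence $t_i$ with $\int_{M_{t_i}}|H|^2\,d\mu_{t_i}\to 0$ from integrability of the area decay, varifold compactness under the uniform mass bound, and stationarity of the limit from the vanishing mean curvature. Your write-up merely makes explicit the steps the paper compresses, namely the integrated identity over the half-line, the $\liminf$ argument, and the Cauchy--Schwarz plus continuity-of-first-variation justification of stationarity.
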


\begin{proof}
By the first variational formula formula, the derivative of the area of $M_t$ satisfies the following identity
\[\partial_t \area(M_t)=-\int_{M_t}|H|^2 d\mu_{M_t}.\]
This identity together with $\lim_{t\to-\infty}\area(M_t)< \infty$ implies that $\area(M_t)< \infty$. Hence we could pick a sequence of $t_i$, converging to $\pm\infty$ if $M_t$ is long-time or ancient respectively, such that $\int_{M_{t_i}}|H|^2d\mu_{M_{t_i}}\to 0$. Then the compactness of varifolds implies that $M_{t_i}$ has a varifold limit $V$, which is stationary according to the mean curvature bound.
\end{proof}

In general the regularity of the limit $V$ is not known. Now we can apply the partial regularity Theorem \ref{Thm:Partial regularity with H and area growth bound} to obtain a partial regularity of the limit of mean curvature flow of surfaces in special $3$-dimensional Riemannian manifolds, which is our Theorem \ref{thm:Main theorem 2}.

\begin{proof}[Proof of Theorem \ref{thm:Main theorem 2}]
Suppose $\lambda(M_0)=\lambda_0$. Proposition \ref{Prop:entropy is finite} implies that $\lambda_0$ is finite. Monotonicity formula of entropy implies that $\lambda(M_t)\leq\lambda_0$, hence Theorem \ref{Thm:entropy bound area growth} implies that the area growth of $M_t$ is uniformly bounded (cf. Corollary \ref{Cor:area growth bound of MCF}). Monotonicity of area of $M_t$ implies that we could a sequence of $t_i$ such that $\int_{M_{t_i}}|H|^2d\mu_{M_{t_i}}\to 0$. Then Theorem \ref{Thm:Partial regularity with H and area growth bound} implies that there is a subsequence of $t_i$ (still denoted by $t_i$) such that $M_{t_i}$ converges to a varifold $V$ in the sense of varifolds, and $\spt V$ is a smoothly embedded minimal surface.
\end{proof}

Almost exactly the same proof shows Theorem \ref{thm:Main theorem 3}. We omit the proof here. 

\section{Some Questions}
Let us conclude this paper by asking some questions. 

\begin{question}
Can we obtain an area growth bound for long-time mean curvature flows in a closed Riemannian manifold without the curvature assumptions?
\end{question}

If this is true, then we can still obtain the partial regularity of the long-time limit of mean curvature flow of surfaces. 

The rescaled mean curvature flow (see \cite{Hu}, \cite[Section 2]{CIMW}) is the gradient flow of Gaussian area in $\R^n$, which has finite area growth bound as $t\to\infty$. The Gaussian measured space is far from the assumptions in Theorem \ref{Thm:Hamilton's monotonicity}. Although the rescaled mean curvature flow is not a mean curvature flow, it shares some similarities to mean curvature flow. Therefore we conjecture that there might be some weaker assumptions to ensure that a mean curvature flow has long-time area growth bound. Otherwise, there exists a closed Riemannian manifold such that the long-time mean curvature flow would become ``complicated" as time goes to infinity. 
\bigskip

\begin{question}
Does the equivalence of the entropy and area growth still hold in other ambient Riemannian manifolds?
\end{question}

It would also be interesting to know that whether the results in Section \ref{S:Area Growth} are still valid for more general Riemannian manifolds. It seems plausible because there are many studies on heat kernel and there might be some useful properties of heat kernel which are out of our scope.

\bigskip
\begin{question}
What is the multiplicity of the convergence of $M_{t_i}$ to $V$ in Theorem \ref{thm:Main theorem 2} and Theorem \ref{thm:Main theorem 3}?
\end{question}

If the convergence has multiplicity $1$, Brakke's regularity theorem implies that the convergence of $M_{t_i}$ to $V$ is actually smooth. Moreover, with the multiplicity $1$ assumtion, the tangent flows of mean curvature flow are unique in many cases, see \cite{Sch}, \cite{CM3} and \cite{CSch}. We guess that the uniqueness of the limit of a long-time mean curvature flow may be true as well.

This conjecture is inspired by the famous multiplicity $1$ conjecture by Ilmanen in \cite{I}. In the rescaled mean curvature flow case, Ilmanen conjecture that the multiplicity $1$ of the tangent flow is always $1$. This conjecture is still open.

\end{document}